\newtheorem{theorem}{Theorem}[section]
\newtheorem{lemma}[theorem]{Lemma}
\newtheorem{prop}[theorem]{Proposition}
\newtheorem{corol}[theorem]{Corollary}
\newtheorem{claim}{Claim}
\theoremstyle{definition}
\newtheorem{defin}[theorem]{Definition}
\newtheorem{exam}[theorem]{Example}
\theoremstyle{remark}
\newtheorem{remk}[theorem]{Remark}
\numberwithin{equation}{section}
 \DeclareMathOperator{\rad}{rad}
  \DeclareMathOperator{\rk}{rk}
 \DeclareMathOperator{\tors}{tors}
 \DeclareMathOperator{\VB}{VB}
 \DeclareMathOperator{\ev}{ev}
 \DeclareMathOperator{\id}{Id}
 \DeclareMathOperator{\CM}{CM}
 \def\hom{\mathop\mathrm{Hom}\nolimits}
 \DeclareMathOperator{\ext}{Ext}
 \DeclareMathOperator{\End}{End}
 \DeclareMathOperator{\cen}{cen}
\DeclareMathOperator{\spec}{Spec}
\DeclareMathOperator{\gd}{gl.dim}
\DeclareMathOperator{\coh}{Coh}
\DeclareMathOperator{\qoh}{Qcoh}
\DeclareMathOperator{\supp}{supp}
\DeclareMathOperator{\aut}{Aut}
\DeclareMathOperator{\br}{Br}
 \DeclareMathOperator{\im}{Im}
 \def\Hom{\mathop{\mathcal{H}\!\mathit{om}}\nolimits}
 \def\Ext{\mathop{\mathcal{E}\!\mathit{xt}}\nolimits}
 \def\Aut{\mathop{\mathcal{A}\!\mathit{ut}}\nolimits}
 \def\proj{\mathop\mathrm{Proj}\nolimits}
\def\emb{\hookrightarrow}
\def\sh{^\sharp}	\def\*{\otimes}
\def\={\setminus}	\def\Arr{\Rightarrow}
\def\+{\oplus}		
\def\sb{\subset}	\def\sbe{\subseteq}
\def\ch{^\vee}		\def\dg{^\dagger}
\def\ito{\stackrel\sim\to}
\def\ol{\overline}
\def\xarr{\xrightarrow}
\def\ilim{\varprojlim}
\def\dlim{\varinjlim}
\def\red{_\mathrm{red}}
 \def\rH{\mathrm{H}}	\def\rD{\mathrm D}
\def\fK{\mathbf k}	\def\kH{\mathcal H}
\def\kK{\mathcal K}	
\def\kI{\mathcal I}	\def\kO{\mathcal O}	
	\def\nH{\mathrm h}
\def\vi{\varphi}	\def\Ga{\Gamma}
\def\om{\omega}		\def\La{\Lambda}
\def\al{\alpha}		\def\be{\beta}
\def\eps{\varepsilon}	\def\th{\theta}
\def\de{\delta}		
\def\tX{\tilde X}	\def\tA{\tilde A}
\def\oX{\breve X}	
\def\kF{\mathcal F}	\def\kG{\mathcal G}
\def\kM{\mathcal M}	
	\def\la{\lambda}
\def\tD{\tilde D}	\def\8{\infty}
\def\dX{\mathfrak X}	\def\dA{\mathfrak A}
\def\lb{\textup{(}}	\def\rb{\textup{)}}
\def\gnrsuch#1#2{\langle\,#1\mid #2\,\rangle}
\def\set#1{\left\{\,#1\,\right\}}
\def\gnr#1{\langle\,#1\,\rangle}
\def\ncsm{non-commu\-ta\-tive scheme}
\def\ncv{non-commu\-ta\-tive variety}
\def\ncs{non-commutative surface}
\def\ncss{non-commu\-ta\-tive surface singularity}
\def\cm{Cohen--Macaulay}
\def\vb{vector bundle}
\def\iff{if and only if }
\def\glg{globally generated}
\def\ggg{generically globally generated}
\def\ncc{non-commutative curve}
\def\wrc{weak reduction cycle}
\def\rec{reduction cycle}
\def\oc{one-to-one correspondence}
\title[Cohen-Macaulay modules over non-commutative singularities]{On Cohen--Macaulay modules 
over non-commutative surface singularities}
\author[Y. Drozd]{Yuriy A. Drozd}
\author[V. Gavran]{Volodymyr S. Gavran}
\address{Institute of Mathematics, National Academy of Sciences of Ukraine,
Tereschenkivska str. 3, 01601 Kyiv, Ukraine}
\email{y.a.drozd@gmail.com, drozd@imath.kiev.ua}
\urladdr{www.imath.kiev.ua/$\sim$drozd}
\email{v.gavran@yahoo.com}
\subjclass[2010]{Primary 16G50, Secondary 16G60, 16S38}
\keywords{\cm\ modules, \vb s, non-commutative surface singularities}
\begin{document}
\begin{abstract}
 We generalize the results of Kahn about a correspondence between \cm\ modules and \vb s to
 non-commutative surface singularities. As an application, we give examples of non-commutative 
 surface singularities which are not \cm\ finite, but are \cm\ tame.
\end{abstract}

\maketitle
 
\tableofcontents

\section*{Introduction}
\label{intro} 

 \cm\ modules over commutative \cm\ rings have been widely studied. A good survey on this topic is the 
 book of Yoshino \cite{yo}. In particular, for curve, surface and hypersurface singularities  
 criteria are known for them to be \emph{\cm\ finite}, i.e. only having finitely many
 indecomposable \cm\ modules (up to isomorphism). For curve singularities and minimally elliptic
 surface singularities criteria are also known for them to be \emph{\cm\ tame}, i.e. only having
 1-parameter families of non-isomorphic indecomposable \cm\ modules \cite{dg,dgk}. Less is known
 if we consider non-commutative \cm\ algebras. In \cite{dk} a criterion was given for a \emph{primary}
 1-dimensional \cm\ algebra to be \cm\ finite. In \cite{art} (see also \cite{ch}) a criterion of
 \cm\ finiteness is given for \emph{normal} 2-dimensional \cm\ algebras (maximal orders). 
 As far as we know, there are no examples of 2-dimensional \cm\ algebras which are not \cm\ finite 
 but are \cm\ tame.
 
 In this paper we use the approach of Kahn \cite{Kahn} to study \cm\ modules over normal 
 non-commutative surface singularities. Just as in \cite{Kahn}, we establish (in  Section 2)
 a \oc\ between such modules and \vb s over some, in general non-commutative, projective 
 curves (Theorem~\ref{the}). In Sections 3 and 4 we apply this result to a special case,
 which we call ``\emph{good elliptic}.'' It is analogous to the minimally elliptic case in 
 \cite{Kahn}, though seems somewhat too restrictive. Unfortunately, we could not find more general
 conditions which ensure such analogy. As an application, we present two examples of \cm\ tame
 non-commutative surface singularities (Examples~\ref{ex1}~and~\ref{ex2}). We hope that this approach
 shall be useful in more general situations too.

\section{Preliminaries}
\label{sec1} 

 We fix an algebraically closed field $\fK$, say \emph{algebra} instead of $\fK$-algebra,
 \emph{scheme} instead of $\fK$-scheme and write $\hom$ and $\*$ instead of $\hom_\fK$ and
 $\*_\fK$. We call a scheme $X$ a \emph{variety} if $\fK(x)=\fK$ for every closed point $x\in X$.

\begin{defin}\label{ncv} 
 A \emph{\ncsm} is a pair $(X,A)$, where $X$ is a scheme and $A$ is a sheaf of $\kO_X$-algebras
 coherent as a sheaf of $\kO_X$-modules. If $X$ is a variety, $(X,A)$ is called a \emph{\ncv}.
 We say that $(X,A)$ is \emph{affine, projective, excellent}, etc. if so is $X$.
 
 A \emph{morphism} of \ncsm s $(X,A)\to (Y,B)$ is their morphism as ringed spaces, i.e. a 
 pair $(\vi,\vi\sh)$, where $\vi:X\to Y$ is a morphism of schemes and $\vi\sh:\vi^{-1}A\to B$ 
 is a morphism of sheaves of algebras. A morphism $(\vi,\vi\sh)$ is said to be \emph{finite, 
 projective} or \emph{proper} if so is $\vi$. We often omit $\vi\sh$ and write 
 $\vi:(X,A)\to(Y,B)$.
 
 For a \ncsm\ $(X,A)$ we denote by $\coh A$ ($\qoh A$) the category of coherent (quasi-coherent)
 sheaves of $A$-modules. Every morphism $\vi:(X,A)\to(Y,B)$ induces functors of direct image
 $\vi_*:\qoh A\to \qoh B$ and inverse image $\vi^*:\qoh B\to \qoh A$, where 
 $\vi^*\kF=A\*_{\vi^{-1}B}\vi^{-1}\kF$. Note that this inverse image does not coincide with the
 inverse image of sheaves of $\kO_X$-modules. The latter (when used) will be denoted by $\vi_X^*$.
 Note also that $\vi^*$ maps coherent sheaves to coherent. The pair $(\vi^*,\vi_*)$ is a pair of 
 adjoint functors, i.e. there is a functorial isomorphism 
 $\hom_A(\vi^*\kF,\kG)\simeq\hom_B(\kF,\vi_*\kG)$ for any sheaf of $B$-modules $\kF$ and any sheaf
 of $A$-modules $\kG$.
 
 We call a coherent sheaf of $A$-modules $\kF$ a \emph{\vb} if it is locally projective,
 i.e. $\kF_p$ is a projective $A_p$-module for every point $p\in X$. We denote by $\VB(A)$ the full
 subcategory of $\coh A$ consisting of \vb s.
  
 A \ncsm\ $(X,A)$ is said to be \emph{regular} if $\gd A_p=\dim_pX$ for every point $p\in X$
 (it is enough to check this property at the closed points).
 
 We say that $(X,A)$ is \emph{reduced} if $X$ is reduced and neither stalk $A_p$ contains
 nilpotent ideals. Then, if $\kK=\kK_X$ is the sheaf of rational functions on $X$, 
 $\kK(A)=A\*_{\kO_X}\kK$ is a locally constant sheaf of semisimple $\kK$-algebras. We call it 
 the \emph{sheaf of rational functions}
 on $(X,A)$. In this case each stalk $A_p$ is an \emph{order} in the algebra $\kK(A)_p$, i.e.
 an $\kO_{X,p}$-algebra finitely generated as $\kO_{X,p}$-module and such that $\kK_p A_p=\kK(A)_p$.
 We say that $(X,A)$ is \emph{normal} if $A_p$ is a maximal order in $\kK(A)_p$ for each $p$.
 Note that a regular scheme is always reduced, but not necessarily normal. 

  A morphism $(\vi,\vi\sh):(X,A)\to(Y,B)$ of reduced \ncsm s is said to be \emph{birational}
 if $\vi:X\to Y$ is birational and the induced map $\kK(B)\to\kK(A)$ is an isomorphism.
 
 A \emph{resolution} of a \ncsm\ $(X,A)$ is a proper birational morphism 
 $(\pi,\pi\sh):(\tX,\tA)\to(X,A)$, where $(\tX,\tA)$ is regular and normal.
\end{defin}

\begin{remk}
 Let $(X,A)$ be a \ncsm\ and $C=\cen(A)$ be the center of $A$. (It means that $C_p=\cen(A_p)$
 for every point $p\in X$.) Let also $X'=\spec C$. The natural morphism $\vi:X'\to X$ is finite
 and $A'=\vi^{-1}A$ is a sheaf of $\kO_{X'}$-modules, so we obtain a morphism 
 $(\vi,\vi\sh):(X',A')\to(X,A)$, where $\vi\sh$ is identity. Moreover, the induced functors
 $\vi^*$ and $\vi_*$ define an equivalence of $\qoh A$ and $\qoh A'$. So, while we are interesting
 in study of sheaves, we can always suppose that $A$ is a sheaf of \emph{central $\kO_X$-algebras}.
 Note that if $(X,A)$ is normal and $A$ is central, then $X$ is also normal.
\end{remk}

 Given a \ncsm\ $(X,A)$ and a morphism of schemes $\vi:Y\to X$, we can consider the \ncsm\
 $(Y,\vi^*_YA)$ and uniquely extend $\vi$ to the morphism $(Y,\vi^*_YA)\to(X,A)$ which we 
 also denote by $\vi$. Especially, if $\vi$ is a blow-up of a subscheme of $X$, we call the
 morphism $(Y,\vi^*_YA)\to(X,A)$ the blow-up of $(X,A)$.

\begin{defin}\label{surface} 
 A reduced excellent \ncv\ $(X,A)$ is called a \emph{\ncs} if $X$ is a surface, i.e. $\dim X=2$. 
 If $X=\spec R$, where $R$ is a local complete noetherian algebra with the residue field 
 $\fK$ (then it is automatically excellent), we say that $(X,A)$ is a \emph{germ of \ncss} or, 
 for short, a \emph{\ncss}. In what follows, we identify a \ncss\ $(X,A)$ with the $R$-algebra 
 $\Ga(X,A)$ and the sheaves from $\qoh A$ with modules over this algebra (finitely generated 
 for the sheaves from $\coh A$). 
\end{defin}

 If $(X,A)$ is a \ncs, there always is a normal \ncs\
 $(X',A')$ and a finite birational morphism $\nu:(X',A')\to(X,A)$.
 We call $(X',A')$, as well as the morphism $\nu$, a \emph{normalization} of $(X,A)$. Note that,
 unlike the commutative case, such normalization is usually not unique. 
 
  Let $(X,A)$ be a connected central \ncs\ such that $X$ is normal, 
  $C\sb X$ be an irreducible curve with the general point $g$, $\kK_C(A)=A_g/\rad A_g$ and 
  $\fK_A(C)=\cen\kK_C(A)$. $A$ is normal \iff it is \cm\ (or, the same, reflexive) as a sheaf of 
  $\kO_X$-modules, $\kK_C(A)$ is a simple 
  algebra and $\rad A_g$ is a principal left (or right) $A_g$-ideal for every curve $C$ \cite{rei}. 
  $\fK_A(C)$ is a finite extension of the field of rational functions 
  $\fK(C)=\kO_{X,g}/\rad\kO_{X,g}$ on the curve $C$. 
  The integer $e_C(A)=\dim_{\fK(C)}\fK_A(C)$ is called the \emph{ramification 
 index} of $A$ on $C$, and $A$ is said to be \emph{ramified on $C$} if $e_C(A)>1$. If $p$ is a 
 regular closed point of $C$, we denote by $e_{C,p}(A)$ the ramification index of the extension
 $\fK_A(C)$ over $\fK(C)$ with respect to the discrete valuation defined by the point $p$.
 For instance, if $C$ is smooth, $e_{C,p}(A)$ is defined for all closed points $p\in C$.
 We denote by $D(A)$ the \emph{ramification divisor} $D=D(A)$ which is the union of all 
 curves $C\sb X$ such that $A$ is ramified on $C$. Note that if $p\in X\setminus D(A)$, then 
 $A_p$ is an Azumaya algebra over $\kO_{X,p}$.

 Suppose that $(X,A)$ is a normal \ncs\ and $A$ is central. Then $X$ is \cm\ and $A$ is maximal
 \cm\ as a sheaf of $\kO_X$-modules. We denote by $\CM(A)$ the category of sheaves of maximal \cm\
 $A$-modules, i.e. the full subcategory of $\coh A$ consisting of sheaves $\kF$ which are maximal
 \cm\ considered as sheaves of $\kO_X$-modules. We often omit the attribute ``maximal'' and just
 say shortly ``\cm\ module.'' Obviously, $\VB(A)\sbe\CM(A)$ and these categories coincide \iff $A$
 is regular. For a sheaf $\kF\in\coh A$ we denote by $\kF\ch$ the sheaf $\Hom_A(\kF,A)$. 
 It always belongs to $\CM(A)$. We also set $\kF\dg=\kF^{\vee\vee}$. There is a morphism of functors
 $\,\id\to{\dg}$, which is isomorphism when restricted onto $\CM(A)$.
 If $\vi:(X,A)\to(Y,B)$ is a morphism of central normal \ncs s, we set $\vi\dg\kF=(\vi^*\kF)\dg$.
 
 It is known that every \ncs\ has a regular resolution. 
 More precisely, we can use the following procedure of Chan--Ingalls \cite{ch}.\!%
 \footnote{\,Note that the term "normal" is used in \cite{ch} in more wide sense, but we only need
 it for our notion of normality.}
 The \ncs\ $(X,A)$ is said to be \emph{terminal} 
 \cite[Definition 2.5]{ch} if the following conditions hold:
 \begin{enumerate}
 \item  $X$ is smooth.
 \item  All irreducible components of $D=D(A)$ are smooth.
 \item  $D$ only has normal crossings (i.e. nodes as singular points).
 \item  At a node $p\in D$, for one component $C_1$ of $D$ containing this point, the field
 $\fK_A(C_1)$ is totally ramified over $\fK(C_1)$ of degree $e=e_{C_1}(A)=e_{C_1,p}(A)$, 
 and for the other component $C_2$ also $e_{C_2,p}(A)=e$.
 \end{enumerate}
 It is shown in \cite{ch} that every terminal \ncs\ is regular and every \ncs\ $(X,A)$ has a 
 terminal resolution $\pi:(\tX,\tA)\to(X,A)$.
 Moreover, such resolution can be obtained by a sequence
 of morphisms $\pi_i$, where each $\pi_i$ is either a blow-up of a closed point or a normalization.
 Then $\pi$ is a projective morphism. If $(X,A)$ is a normal \ncss, $\oX=X\=\{o\}$, where $o$ is 
 the unique closed point of $X$, the restriction of $\pi$ onto $\pi^{-1}(\oX)$ is an isomorphism
 and we always identify $\pi^{-1}(\oX)$ with $\oX$. The subscheme $E=\pi^{-1}(o)\red$ is a
 connected (though maybe reducible) projective curve called the \emph{exceptional curve} of the 
 resolution $\pi$. 
 
 Recall also that, for a normal \ncss\ $A$, the category $\CM(A)$, as well as the ramification data
 of $A$, only depends on the algebra $\kK(A)$ \cite[(1.6)]{art}. If $A$ is central and 
 \emph{connected}, i.e. indecomposable as a ring, $\kK(A)$ is a central simple algebra over 
 the field $\kK$, so the category $\CM(A)$ is defined by the class of $\kK(A)$ in the Brauer 
 group $\br(\kK)$, and this class is completely characterized by its ramification data. 
 
 We also use the notion of \emph{non-commutative formal scheme}, which is a pair $(\dX,\dA)$,
 where $\dX$ is a ``usual'' (commutative) formal scheme and $\dA$ is a sheaf of $\kO_\dX$-algebras
 coherent as a sheaf of $\kO_\dX$-modules. If $(X,A)$ is \ncsm\ and $Y\sb X$ is a closed subscheme,
 the \emph{completion} $(\hat X,\hat A)$ of $(X,A)$ along $Y$ is well-defined and general properties
 of complete schemes and their completions, as in \cite{ha,ega}, hold in non-commutative case too.
 
 \section{Kahn's reduction}
 \label{sec2}
 
  From now on we consider a normal \ncss\ $(X,A)$ and suppose $A$ central. We fix a resolution
  $\pi:(\tX,\tA)\to(X,A)$, where $\tA$ is also supposed central. Then $\CM(\tA)=\VB(\tA)$ and we
  consider $\pi\dg$ as a functor $\CM(A)\to\VB(\tA)$. A \vb\ $\kF$ is said to be \emph{full} if
  it is isomorphic to $\pi\dg M$ for some (maximal) \cm\ $A$-module $M$. We denote by $\VB^f(\tA)$
  the full subcategory of $\VB(\tA)$ consisting of full \vb s. We also set 
  $\om_{\tA}=\Hom_{\tX}(\tA,\om_{\tX})$, where $\om_{\tX}$ is a canonical sheaf over $\tX$, and call
  $\om_{\tA}$ the \emph{canonical sheaf} of $\tA$. It is locally free, i.e. belongs to $\VB(\tA)$.
  
  Given a coherent sheaf $\kF\in\coh\tA$, we denote by $\ev_\kF$ the natural map
  $\Ga(\tX,\kF)\*\tA\to\kF$.\!%
\footnote{\,Recall that $\Ga(\tX,\kF)\simeq\hom_{\tA}(\tA,\kF)$.}
   We say that $\kF$ is \emph{\glg} if $\im\ev_\kF=\kF$ and
  \emph{\ggg} if $\supp(\kF/\im\ev_\kF)$ is discrete, i.e. consists of finitely many closed points.
  
 \begin{theorem}[Cf. {\cite[Proposition 1.2]{Kahn}}]\label{full}
 \begin{enumerate}
 \item The functor $\pi\dg$ establishes an equivalence between the categories $\CM(A)$ and
 $\VB^f(\tA)$, its quasi-inverse being the functor $\pi_*$.
 \item A \vb\ $\kF\in\VB(\tA)$ is full \iff the following conditions hold:
 \begin{enumerate}
 \item $\kF$ is \ggg.
 \item The restriction map $\Ga(\tX,\kF)\to\Ga(\oX,\kF)$ is surjective,\\
  \emph{or equivalently, using local cohomologies,}
 \item[(b$'$)]  The map $\al_\kF:\rH^1_E(\tX,\kF)\to\rH^1(\tX,\kF)$ is injective.
 \end{enumerate}
  Under these conditions $\kF\simeq\pi\dg\pi_*\kF$.
 \end{enumerate}
 \end{theorem}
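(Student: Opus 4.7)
The plan is to follow the commutative model of Kahn closely, the essential question being whether the reflexive-hull machinery and local-cohomology arguments go through for sheaves of $\tA$-modules. For part (1), essential surjectivity of $\pi\dg\colon\CM(A)\to\VB^f(\tA)$ is built into the definition, so I only need to exhibit $\pi_*$ as a quasi-inverse. For $M\in\CM(A)$, I want $\pi_*\pi\dg M\simeq M$. Over $\oX$ the morphism $\pi$ is an isomorphism and $\pi\dg M$ restricts to $M$, so the two sheaves agree on $\oX$; since $M$ is \cm\ on the $2$-dimensional normal scheme $X$, it satisfies $S_2$ and is determined by this restriction. On the other hand $\pi\dg M$ is a \vb\ on the regular surface $\tX$, whence $\rH^0_E(\tX,\pi\dg M)=0$ and a short local-cohomology computation shows that $\pi_*\pi\dg M$ is also $S_2$. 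Two \cm\ sheaves coinciding on $\oX$ must coincide, giving $\pi_*\pi\dg M\simeq M$. Fullness and faithfulness of $\pi\dg$ then follow formally, since $f:\pi\dg M\to\pi\dg N$ and $\pi\dg(\pi_*f)$ are morphisms of reflexive $\tA$-modules coinciding on the dense open $\oX$.

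For the necessity direction in part (2), the equivalence of (b) and (b$'$) is the standard local-cohomology observation: $\kF$ being a \vb\ on the regular surface $\tX$ gives $\rH^0_E(\tX,\kF)=0$, and the localization sequence
\[
0\to\Ga(\tX,\kF)\to\Ga(\oX,\kF)\to\rH^1_E(\tX,\kF)\xrightarrow{\al_\kF}\rH^1(\tX,\kF)
\]
shows surjectivity of the restriction equals injectivity of $\al_\kF$. For $\kF=\pi\dg M$, both $\Ga(\tX,\kF)$ and $\Ga(\oX,\kF)$ are identified with $M$ (using part (1) and $S_2$ of $M$), so the restriction is an isomorphism. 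For (a), the natural map $\pi^*M\to\pi\dg M$ is an isomorphism off a finite set of points (the reflexive hull on a regular surface differs from its source only in codimension $2$), and $\ev_\kF$ factors through it, so $\ev_\kF$ has discrete cokernel.

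The sufficiency direction is the main step. Put $M=\pi_*\kF=\Ga(\tX,\kF)$. Condition (b) combined with the localization sequence yields $\rH^0_{\{o\}}(X,M)=\rH^1_{\{o\}}(X,M)=0$, so $M$ is $S_2$ and hence \cm. The key claim is that the canonical map $\be:\pi\dg M\to\kF$, induced by the adjunction counit after $(\cdot)\dg$ and reflexivity of $\kF$, is an isomorphism. On $\oX$ this is obvious; the kernel of $\be$ is a torsion-free subsheaf of $\pi\dg M$ supported on $E$, hence zero. The cokernel of $\be$ is a quotient of the cokernel of $\ev_\kF$ (since $\ev_\kF$ factors through $\be$), so (a) forces it to have discrete support, concentrated above $o$. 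Applying $\pi_*$ to $0\to\pi\dg M\to\kF\to\mathrm{coker}\,\be\to 0$, the induced map $\pi_*\pi\dg M\to\pi_*\kF$ is by construction the identity $M\to M$, so $\pi_*\mathrm{coker}\,\be=0$; finite-length support above $o$ then gives $\mathrm{coker}\,\be=0$ and $\pi\dg M\simeq\kF$. The main technical obstacle I expect is the $\pi_*\pi\dg M\simeq M$ step in part (1): the $S_2$/reflexivity machinery has to interact correctly with $\tA$-linearity rather than merely $\kO_{\tX}$-linearity, and both $\pi_*$ and $(\cdot)\dg$ must preserve the appropriate \cm\ condition. Once this is in place, the remaining steps reduce to standard torsion-freeness and local-cohomology manipulations on the regular surface $\tX$.
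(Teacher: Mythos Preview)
Your overall architecture matches the paper's closely, and most of your steps are correct. There is, however, one genuine gap in the sufficiency direction of part~(2).

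You argue that $\beta:\pi\dg M\to\kF$ is injective with cokernel $C$ of finite length, then apply $\pi_*$ and claim that since $\pi_*\beta$ is ``the identity $M\to M$'' one gets $\pi_*C=0$, hence $C=0$. But $\pi_*$ is only left exact: from
\[
0\to\pi_*\pi\dg M\xrightarrow{\ \pi_*\beta\ }\pi_*\kF\to\pi_*C\to R^1\pi_*\pi\dg M\to\cdots
\]
the fact that $\pi_*\beta$ is an isomorphism only tells you that $\pi_*C$ embeds into $R^1\pi_*\pi\dg M$, which need not vanish. So $\pi_*C=0$ does not follow.

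The fix is immediate and is already implicit in your own necessity argument: you have shown $\beta$ is an injection of locally free $\tA$-sheaves on the regular surface $\tX$ whose cokernel is supported in codimension~$2$. Both source and target are reflexive (hence $S_2$) as $\kO_{\tX}$-modules, and an inclusion of $S_2$ sheaves that is an isomorphism outside a $0$-dimensional set is an isomorphism. Equivalently, both $\pi\dg M$ and $\kF$ are reflexive hulls of the common torsion-free subsheaf $\im(\pi^*M\to\kF)=\pi^*M/\tors$. This is exactly how the paper concludes: it notes that the image of $\pi^*M\to\kF$ coincides with $\im\ev_\kF$, and since $\kF$ is \ggg, the induced map $\pi\dg M\to\kF$ is forced to be an isomorphism. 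No push-forward argument is used.

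A minor stylistic difference in part~(1): you prove $\pi_*\pi\dg M\simeq M$ directly via $S_2$ extension across $\{o\}$, whereas the paper first establishes $\pi\dg\pi_*\kF\simeq\kF$ from part~(2) and then reads off the equivalence from the adjunction $\hom_A(M,M)\simeq\hom_{\tA}(\pi\dg\pi_*\kF,\kF)\simeq\hom_{\tA}(\kF,\kF)$. Both routes are valid; yours is slightly more direct but, as you anticipated, requires checking that the $S_2$ extension argument is compatible with the $\tA$-module structure (it is, since $\tA$ is locally free over $\kO_{\tX}$, so reflexivity over $\tA$ and over $\kO_{\tX}$ coincide for vector bundles).
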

 \begin{proof}
  Note that there is an exact sequence
 \[
  0\to\tors(\pi^*M)\to\pi^*M \xarr{\gamma_M}\pi\dg M \to \ol M\to 0, 
 \]
 where $\tors(\kM)$ denotes the periodic part of $\kM$ and the support of 
 $\ol M$ consists of finitely many closed points. Since $\pi^*M$ is
 always \glg, so is also $\im\gamma_M$. Therefore, $\pi\dg M$ is \ggg. If $M$ is \cm, the
 restriction map $\Ga(X,M)\to\Ga(\oX,M)=\Ga(\oX,\pi^*M)$ is an isomorphism. Since $M$
 naturally embeds into $\Ga(\tX,\pi^*M)$ and hence into $\Ga(\tX,\pi\dg M)$, the 
 restriction $\Ga(\tX,\pi\dg M)\to\Ga(\oX,\pi\dg M)$ is surjective. 
 
 Suppose now that the conditions (a) and (b) hold. Set $M=\pi_*\kF$. Since $\pi$ is projective, 
 $M\in\coh A$. The condition (b) implies that $M\in\CM(A)$. Note that $\Ga(X,M)=\Ga(\tX,\kF)$, 
 so the image of the natural map $\pi^*M\to\kF$ coincides with $\im\gamma_M$. As $\kF$ is 
 \ggg, it implies that the natural map $\pi\dg M\to\kF$ is an isomorphism. It proves (2).
 
 Obviously, the functors $\pi\dg:CM(A)\to\VB^f(\tA)$ and $\pi_*:\VB^f(\tA)\to\CM(A)$ are adjoint. 
 Moreover, if $M=\pi_*\kF$, where $\kF\in\VB^f(\tA)$, there are functorial isomorphisms
 \begin{align*}
  \hom_A(M,M)&\simeq \hom_{\tA}(\pi^*\pi_*\kF,\kF)\simeq\\
   &\simeq\hom_{\tA}(\pi\dg\pi_*\kF,\kF)\simeq \hom_{\tA}(\kF,\kF).
 \end{align*}
 It proves (1).
 \end{proof} 

\begin{remk}
 A full \vb\ over $\tA$ need not be \ggg\ as a sheaf of $\kO_{\tX}$-modules. Moreover, examples
 below show that even the sheaf $\tA=\pi^*A=\pi\dg A$ need not be \ggg\ as a sheaf
 of $\kO_{\tX}$-modules.
\end{remk}

\begin{defin}\label{La} 
  From now on we consider a sheaf of ideals $\kI$
 in $\tA$ such that $\supp(\tA/\kI)\sbe E$, $\La=\tA/\kI$ and
 $Z=\spec(\cen\La)$. Then $(Z,\La)$ is a projective \ncc, i.e. a projective \ncv\ of 
 dimension $1$ (maybe non-reduced). We set $\om_Z=\Ext^1_{\tX}(\kO_Z,\om_{\tX})$ and 
 $$
 \om_\La=\Ext^1_{\tA}(\La,\om_{\tA})\simeq\Ext^1_{\tX}(\La,\om_{\tX})\simeq\Hom_Z(\La,\om_Z).
 $$
\end{defin}

 The sheaves $\om_Z$ and $\om_\La$, respectively, are canonical sheaves for $Z$ and $\La$.
 It means that there are Serre dualities
 \begin{align*}
  \ext^i_Z(\kF,\om_Z)&\simeq\rD\rH^{1-i}(E,\kF)\ \text{ for any } \kF\in\coh Z,\\
  \ext^i_\La(\kF,\om_\La)&\simeq\rD\rH^{1-i}(E,\kF)\ \text{ for any } \kF\in\coh \La,
 \end{align*}
 where $\rD V$ denotes the vector space dual to $V$.
  
 \begin{defin}\label{bi} 
 We say that an ideal $I$ of a ring $R$ is \emph{bi-principal} if $I=aR=Ra$ for a
 non-zero-divisor $a\in R$. A sheaf of ideals $\kI\sb\tA$ is said to be \emph{locally bi-principal} 
 if every point $x\in X$ has a neighbourhood $U$ such that the ideal $\Ga(U,\kI)$ is bi-principal in 
 $\Ga(U,\tA)$. 
 \end{defin}

 \begin{lemma}\label{omega} 
 If the sheaf of ideals $\kI$ is locally bi-principal, then 
 $$
 \om_\La\simeq\Hom_{\tA}(\kI,\om_{\tA})\*_{\tA}\La.
 $$
 \end{lemma}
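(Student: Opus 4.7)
My plan is to compute both sides of the claimed isomorphism as cokernels derived from the short exact sequence $0\to\kI\to\tA\to\La\to 0$, and then identify those cokernels using the bi-principal hypothesis.

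First, I would apply $\Hom_{\tA}(-,\om_{\tA})$ to the sequence. Since $\tA$ is projective over itself, $\Ext^1_{\tA}(\tA,\om_{\tA})=0$ and the long exact sequence gives
$$\om_\La=\Ext^1_{\tA}(\La,\om_{\tA})\simeq\operatorname{coker}\bigl(\rho\colon\om_{\tA}\to\Hom_{\tA}(\kI,\om_{\tA})\bigr),$$
where $\rho(x)(i)=i\cdot x$ is the restriction. In parallel, tensoring the sequence with $\Hom_{\tA}(\kI,\om_{\tA})$ yields
$$\Hom_{\tA}(\kI,\om_{\tA})\otimes_{\tA}\La\simeq\operatorname{coker}\bigl(\mu\colon\Hom_{\tA}(\kI,\om_{\tA})\otimes_{\tA}\kI\to\Hom_{\tA}(\kI,\om_{\tA})\bigr),$$
where $\mu$ is induced by the inclusion $\kI\hookrightarrow\tA$ via the $\tA$-bimodule structure on $\Hom_{\tA}(\kI,\om_{\tA})$.

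The identification of the two cokernels is where the bi-principal condition enters. Working locally, $\kI=\tA a=a\tA$ for a non-zero-divisor $a$, and the left-$\tA$-module map $\tA\to\kI$, $r\mapsto ra$, is an isomorphism. Two consequences follow: first, $\Hom_{\tA}(\kI,\om_{\tA})\simeq\om_{\tA}$ locally via $\phi\mapsto\phi(a)$; second, the evaluation $\ev\colon\Hom_{\tA}(\kI,\om_{\tA})\otimes_{\tA}\kI\to\om_{\tA}$, $\phi\otimes j\mapsto\phi(j)$, is an isomorphism locally and hence globally. Using left $\tA$-linearity of $\phi\in\Hom_{\tA}(\kI,\om_{\tA})$, i.e.\ $\phi(ij)=i\phi(j)$ for $i,j\in\kI$, I would then verify that the composition
$$\Hom_{\tA}(\kI,\om_{\tA})\otimes_{\tA}\kI\xrightarrow{\ev}\om_{\tA}\xrightarrow{\rho}\Hom_{\tA}(\kI,\om_{\tA})$$
coincides with $\mu$. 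Then $\im(\rho)=\rho(\im\ev)=\im(\rho\circ\ev)=\im(\mu)$, which gives an isomorphism of the two cokernels and hence the lemma.

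The hardest part will be the noncommutative bimodule bookkeeping: both $\kI$ and $\om_{\tA}$ are $\tA$-bimodules, so $\Hom_{\tA}(\kI,\om_{\tA})$ carries several compatible module actions, and the comparison $\mu=\rho\circ\ev$ requires matching these consistently. The bi-principal hypothesis — in particular the local automorphism $\Phi$ of $\tA$ determined by $ar=\Phi(r)a$, which is what permits passing between the presentations $\kI=\tA a$ and $\kI=a\tA$ — is precisely what renders $\ev$ an isomorphism and lets the diagram commute.
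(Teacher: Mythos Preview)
Your proof is correct and follows essentially the same route as the paper: both compute $\om_\La$ and $\Hom_{\tA}(\kI,\om_{\tA})\*_{\tA}\La$ as cokernels of the maps obtained from the short exact sequence $0\to\kI\to\tA\to\La\to0$ by applying $\Hom_{\tA}(-,\om_{\tA})$ and $\Hom_{\tA}(\kI,\om_{\tA})\*_{\tA}(-)$ respectively, and then identify these via the evaluation isomorphism $\Hom_{\tA}(\kI,\om_{\tA})\*_{\tA}\kI\ito\om_{\tA}$ supplied by the locally bi-principal hypothesis. The paper is terser about the bimodule bookkeeping you flag, but the argument is the same.
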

\begin{proof}
 Let $\kI'=\Hom_{\tA}(\kI,\om_{\tA})$.
 Consider the locally free resolution $0\to\kI\xarr\tau\tA\to\La\to0$ of $\La$. Since $\om_{\tA}$
 is locally free over $\tA$, it gives an exact sequence
 \[
 0\to \om_{\tA}\xarr{\tau^*}\kI'\to\Ext^1_{\tA}(\La,\om_{\tA})\to 0. 
 \]
 On the other hand, tensoring the same resolution with $\kI'$ gives an exact sequence
 \[
 0\to\kI'\*_{\tA}\kI\xarr{1\*\tau}\kI'\to \kI'\*_{\tA}\La\to 0.
 \]
 Since $\kI$ is locally bi-principal, the natural map $\kI'\*_{\tA}\kI\to\om_{\tA}$ is an isomorphism,
 and, if we identify $\kI'\*_{\tA}\kI$ with $\om_{\tA}$, $1\*\tau$ identifies with $\tau^*$.
 It implies the claim of the Lemma.
\end{proof}

\begin{defin}\label{reduct} 
 Let $\kI\sb\tA$ be a bi-principal sheaf of ideals such that $\supp(\tA/\kI)=E$, $\La=\tA/\kI$ and
 $I=\kI/\kI^2$. (Note that $I\in\VB(\La)$.) $\kI$ is said to be a \emph{\wrc} if
 \begin{enumerate}
 \item  $I$ is \ggg\ as a sheaf of $\La$-modules.
 \item  $\rH^1(E,I)=0$.
 \end{enumerate}
 If, moreover,
  \vskip.5ex
 \hskip1ex (3)\hskip1ex $\om_\La\ch=\Hom_\La(\om_\La,\La)$ is \ggg\ over $\La$,
 \vskip.5ex\noindent
 $\kI$ is called a \emph{\rec}. 
 
 For a \wrc\ $\kI$ we define the \emph{Kahn's reduction functor} 
 $R_\kI:\CM(A)\to\VB(\La)$ as
 \[
  R_\kI(M) = \La\*_{\tA}\pi\dg M.
 \]
\end{defin}
 
 We fix a \wrc\ $\kI$ and keep the notation of the preceding Definition.
 We also set $\La_n=\tA/\kI^n$, $I_n=\kI^n/\kI^{n+1}$, $\kI^{-n}=(\kI^n)\ch$
 and $I_{-n}=\kI^{-n}/\kI^{1-n}$. 
 In particular, $\La_1=\La$ and $I_1=I$. One easily sees that 
 $I_n\simeq I\*_\La I\*_\La \dots\*_\La I$ ($n$ times) and 
 $I_{-n}\simeq I_n\ch=\Hom_\La(I_n,\La)$.

\begin{prop}\label{zero} 
 If a coherent sheaf $F$ of $\La$-modules is \ggg, then $\rH^1(E,I\*_\La F)=0$.
 In particular, $\rH^1(E,I_n)=0$
\end{prop}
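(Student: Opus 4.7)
The plan is to use right-exactness of $-\*_\La I$ to reduce the general \ggg\ case to condition~(2) of Definition~\ref{reduct}, namely $\rH^1(E,I)=0$. Set $V:=\Ga(E,F)$; this is a finite-dimensional $\fK$-vector space because $E$ is projective and $F$ is coherent. The \ggg\ hypothesis on $F$ provides an exact sequence
\[
V\*\La\xarr{\ev_F}F\to T\to 0
\]
in which $\supp T$ is discrete. Applying the right-exact functor $-\*_\La I$ produces
\[
V\*I\to I\*_\La F\to I\*_\La T\to 0.
\]

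Let $A'\sbe I\*_\La F$ denote the image of the first map, so that $0\to A'\to I\*_\La F\to I\*_\La T\to 0$ is short exact. I would then argue that both outer terms have vanishing $\rH^1$ on $E$. The right outer term $I\*_\La T$ inherits discrete support from $T$, whence $\rH^1(E,I\*_\La T)=0$. For $A'$: it is a quotient of $V\*I$, and since $\dim E=1$ one has $\rH^{\ge 2}(E,-)=0$, so the long exact sequence associated to the surjection $V\*I\twoheadrightarrow A'$ gives a surjection $\rH^1(E,V\*I)\twoheadrightarrow\rH^1(E,A')$; but the source equals $V\*\rH^1(E,I)$, which vanishes by condition~(2). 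The long exact sequence of the displayed short exact sequence then yields $\rH^1(E,I\*_\La F)=0$.

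For the ``in particular'' claim, I would induct on $n$. The base $n=1$ is once again condition~(2). For $n\ge 2$ one uses the isomorphism $I_n\simeq I\*_\La I_{n-1}$ recorded just before the proposition, so it suffices to verify that $I_{n-1}$ is \ggg\ before applying the main statement. This follows by a parallel induction: the class of \ggg\ $\La$-sheaves is closed under $\*_\La$, since at any closed point $p$ where both factors of a tensor product $G_1\*_\La G_2$ are globally generated, the products of restrictions of global sections generate $(G_1\*_\La G_2)_p$, and the union of the two (discrete) loci of non-generation remains discrete. Starting from condition~(1) for $I_1=I$, one obtains \ggg ness of every $I_k$.

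The only delicate point is the closure of the \ggg\ property under $\*_\La$ needed for the induction; this is a local check that uses only commutation of tensor product with restriction and that a finite union of finite sets is finite. The rest is a routine right-exactness plus cohomology chase, where the vanishing of $\rH^{\ge 2}$ on the $1$-dimensional scheme $E$ is what allows $\rH^1$-vanishing to transfer along a surjection.
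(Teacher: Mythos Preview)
Your proof is correct and follows the paper's argument almost verbatim: both tensor the evaluation sequence of $F$ by $I$, then combine $\rH^1(E,V\*I)=0$, vanishing of $\rH^{\ge2}$ on the one-dimensional $E$, and the discrete support of $I\*_\La T$ to conclude. The only cosmetic differences are that the paper invokes local freeness of $I$ over $\La$ to get a four-term exact sequence (rather than your three-term right-exact one), and that it leaves the ``in particular'' implicit where you spell out closure of the \ggg\ property under $\*_\La$.
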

\begin{proof}
 Let $H=\Ga(E,F)$. Consider the exact sequence 
 \[
 0\to N\to H\*\La\to F \to T \to 0,
 \]
 where $N=\ker\ev_F$ and $\supp T$ is $0$-dimensional. It gives the exact sequence
 \[
 0\to I\*_\La N \to H\*I\to I\*_\La F\to I\*_\La T\to0.  
 \]
 Since $\rH^1(E,H\*I)=\rH^1(E,I\*_\La T)=0$, we get that $\rH^1(E,I\*_\La F)=0$.
\end{proof}

 For any \vb\ $\kF$  over $\tA$ set $F=\La\*_{\tA}\kF$ and $F_n=\La_n\*_{\tA}\kF$.
 There are exact sequences
\begin{align}
 &0\to I_n\to\La_{n+1}\to \La_n\to 0, \notag\\
 &0\to I_n\*_\La F\to F_{n+1}\to F_n\to 0. \label{seq1} 
\end{align}
 For $n=1$, tensoring the second one with $I\ch=\Hom_\La(I,\La)$, we get
 \[
  0\to F\to I\ch\*_{\La_2}F_2\to I\ch\*_\La F\to 0. 
 \]

\begin{prop}\label{FkF} 
 Let $\kI$ be a \wrc\ and $\kF$ be a \vb\ over $\tA$ such that $F$ is \ggg\
 over $\La$. Then $\kF$ is also \ggg\ and $\rH^1(\tX,\kI\*_{\tA}\kF)=0$.
 
 \smallskip
 \emph{Note that if $\kF$ is \ggg\ and $\rH^1(\tX,\kI\*_{\tA}\kF)=0$, then $F$ is also \ggg,
 since the map $\rH^0(\tX,\kF)\to\rH^0(\tX,F)$ is surjective.}
\end{prop}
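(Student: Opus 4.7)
The plan is first to prove the vanishing $\rH^1(\tX,\kI\*_{\tA}\kF)=0$ by a formal-functions argument, and then to deduce from it, together with the hypothesis on $F$, that $\kF$ itself is \ggg. Since $\kI$ is bi-principal and $\kF$ is locally projective, the natural map $\kI^n\*_{\tA}\kF\to\kI^n\kF\sb\kF$ is an isomorphism, and setting $G_n=I_n\*_\La F$ one gets $\kI^n\kF/\kI^{n+1}\kF\simeq G_n$ for every $n\ge1$. The first step is to show by induction, using that the tensor product of \ggg\ $\La$-sheaves is again \ggg, that each $G_n$ is \ggg\ over $\La$ (with $G_1=I\*_\La F$ \ggg\ because both $I$ and $F$ are); Proposition~\ref{zero} applied to $G_{n-1}$ then yields $\rH^1(E,G_n)=0$ for every $n\ge1$.

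Next I would exploit the finite filtration $\kI\kF\supset\kI^2\kF\supset\dots\supset\kI^{n+1}\kF$ whose successive graded pieces $G_1,\dots,G_n$ are all supported on $E$ with vanishing $\rH^1$, to deduce by a trivial induction on filtration length that $\rH^1(\tX,\kI\kF/\kI^{n+1}\kF)=0$ for every $n\ge1$. Then I would pass to the limit: since $\pi$ is projective and an isomorphism away from $E$, $\rH^1(\tX,\kI\kF)$ is a finite-length, and hence $\gM$-adically complete, $R$-module, so Grothendieck's theorem on formal functions identifies it with $\varprojlim_n\rH^1(\tX,\kI\kF/\gM^n\kI\kF)$. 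As the $\kI$-adic and $\gM$-adic filtrations on $\kI\kF$ are cofinal (both define the $E$-adic topology, by Artin--Rees), this limit also equals $\varprojlim_n\rH^1(\tX,\kI\kF/\kI^{n+1}\kF)=0$, giving the desired vanishing.

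Finally, to show that $\kF$ is \ggg, I would combine this vanishing with the short exact sequence $0\to\kI\kF\to\kF\to F\to0$ to see that the restriction map $\Ga(\tX,\kF)\to\Ga(E,F)$ is surjective, then lift along it the global sections of $F$ supplied by its \ggg\ hypothesis. Since $X$ is affine, the coherent $A$-module $\pi_*\kF$ is generated by its global sections, so $\im\ev_\kF=\kF$ already on $\oX=\tX\setminus E$; hence $\supp(\kF/\im\ev_\kF)\sbe E$. A Nakayama argument at the generic point $\eta$ of each irreducible component of $E$ then finishes the job: because $\tA$ is normal, $\tA_\eta$ is a maximal order in a simple algebra over a DVR, so $\kI_\eta$ lies in its Jacobson radical, and Nakayama applied to the stalk of $\kF/\im\ev_\kF$ at $\eta$ forces this stalk to vanish.

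The main obstacle is the passage to the limit in the second paragraph: one has to justify that the $\kI$-adic filtration on $\kI\kF$ yields the same inverse limit of cohomologies as the standard $\gM$-adic filtration appearing in formal functions, which is a cofinality / Artin--Rees matter. A secondary subtlety is the Nakayama step in the final paragraph, which genuinely exploits the non-commutative and normality structure of $\tA$ at codimension-one points of $\tX$.
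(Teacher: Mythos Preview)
Your proposal is correct and follows essentially the same approach as the paper: first establish $\rH^1(\tX,\kI\*_{\tA}\kF)=0$ via formal functions together with the vanishing $\rH^1(E,I_m\*_\La F)=0$ supplied by Proposition~\ref{zero}, then deduce that $\kF$ is \ggg\ from the surjectivity of $\Ga(\tX,\kF)\to\Ga(E,F)$ and generation on $\oX$. You are simply more explicit on two points the paper leaves implicit --- the cofinality of the $\kI$-adic and $\gM$-adic filtrations needed for the inverse-limit step, and the Nakayama argument at the generic points of $E$ (where normality of $\tA$ ensures $\kI_\eta\subseteq\rad\tA_\eta$) that underlies the paper's sentence ``Since $\supp\La=E$, it is enough to show that the global sections of $\kF$ generate $F_p$ for almost all $p\in E$.''
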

\begin{proof}
 We first prove the second claim.
 Recall that, by the Theorem on Formal Functions \cite[Theorem III.11.1]{ha},
 \[
\textstyle  \rH^1(\tX,\kI\*_{\tA}\kF)\simeq\ilim_n \rH^1(E,\kI/\kI^n\*_{\tA}\kF).
 \]
 (We need not use completion, since $\rH^1(\tX,\kM)$ is finite dimensional for every 
 $\kM\in\coh\tX$.) Since $\kI/\kI^n$ is filtered by $I_m\ (1\le m<n)$, we have to show
 that $\rH^1(E,I_m\*_{\tA}\kF)=\rH^1(E,I_m\*_\La F)=0$ for all $m$. It follows from
 Proposition~\ref{zero}. 
 
 Note that $\Ga(\tX,\kF)=\Ga(X,\pi_*\kF)$ and $\pi_*\kF$ is \glg, since $X$ is affine.
 Moreover, the sheaves $\kF$ and $\pi_*\kF$ coincide on $\oX$. Hence $\Ga(\tX,\kF)$ generate
 $\kF_p$ for all $p\in\oX$. Therefore, we only have to prove that they generate $\kF_p$
 for almost all points $p\in E$. Since $\supp\La=E$, it is enough to show that the global
 sections of $\kF$ generate $F_p$ for almost all $p\in E$. From the exact sequence
 $0\to \kI\*_{\tA}\kF\to\kF\to F\to 0$ and the equality $\rH^1(\tX,\kI\*_{\tA}\kF)=0$
 we see that the restriction $\Ga(\tX,\kF)\to\Ga(E,F)$ is surjective. Since $F$ is \ggg,
 so is also $\kF$.
\end{proof}

\begin{corol}\label{reduct-2}  
 A locally bi-principal sheaf of ideals $\kI\sb\tA$ is a \wrc\ \iff 
 \begin{enumerate}
 \item $\kI$ is \ggg.
 \item $\rH^1(\tX,\kI)=0$.
 \end{enumerate}
 It is a \rec\ if and only if, moreover, $\om\ch_{\tA}\*_{\tA}\kI$ is \ggg.
\end{corol}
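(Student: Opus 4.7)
The plan is to prove the two equivalences in the Corollary—weak reduction cycle and reduction cycle—in turn, using Proposition~\ref{FkF} in one direction and a direct formal-functions argument in the other.

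For the \wrc\ statement, the $(\Rightarrow)$ direction can be handled by two applications of Proposition~\ref{FkF}. Taking $\kF=\tA$ (with $F=\La$ trivially \glg, hence \ggg) gives $\rH^1(\tX,\kI)=0$. Taking $\kF=\kI$—a \vb\ by local bi-principality—yields $F=\La\*_{\tA}\kI=I$, which is \ggg\ by the \wrc\ assumption, and the proposition then forces $\kI$ to be \ggg. For the $(\Leftarrow)$ direction, \ggg\ of $I$ is immediate: the surjection $\kI\to I$ sends a family of global sections generating $\kI$ outside a finite set to generators of $I$ outside the same set. The nontrivial point is to deduce $\rH^1(E,I)=0$ from $\rH^1(\tX,\kI)=0$. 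My plan is to invoke the Theorem on Formal Functions (no completion needed, since these cohomology groups are finite-dimensional) to rewrite the hypothesis as
$$
0=\rH^1(\tX,\kI)\simeq\ilim_n\rH^1(E,\kI/\kI^n).
$$
The short exact sequences $0\to I_n\to\kI/\kI^{n+1}\to\kI/\kI^n\to 0$ give long-exact-sequence segments $\rH^1(E,\kI/\kI^{n+1})\to\rH^1(E,\kI/\kI^n)\to\rH^2(E,I_n)$, and $\rH^2(E,I_n)=0$ because $E$ is one-dimensional. Thus every transition map in the inverse system is surjective; a standard inductive lifting then shows the projection from $\varprojlim$ onto each level is surjective. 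Since the limit is zero, every $\rH^1(E,\kI/\kI^n)$ vanishes, and $n=2$ is exactly $\rH^1(E,I)=0$.

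For the \rec\ statement, I plan to identify $\om_\La\ch$ with $\La\*_{\tA}\kF$ for $\kF:=\om_{\tA}\ch\*_{\tA}\kI$, which follows from Lemma~\ref{omega} combined with local bi-principality (so $\kI$ is an invertible $\tA$-bimodule and dualities commute with the tensor by $\kI$). The $(\Leftarrow)$ direction is then the same elementary ``\ggg\ passes to quotients'' observation used for $I$ above. The $(\Rightarrow)$ direction uses that $\kI$ has already been established as a \wrc\ by the first part, so Proposition~\ref{FkF} applies to this $\kF$: \ggg\ of $F=\om_\La\ch$ forces \ggg\ of $\kF=\om_{\tA}\ch\*_{\tA}\kI$.

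The hardest step is the middle one, namely $\rH^1(E,I)=0$: a priori the formal-functions isomorphism only controls the inverse limit, not the individual finite-dimensional pieces. What rescues the argument is the one-dimensionality of $E$, which kills $\rH^2(E,I_n)$ and thereby makes all transitions in the inverse system surjective, enabling termwise vanishing.
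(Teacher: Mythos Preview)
Your proof is correct and follows the same overall architecture as the paper's: Proposition~\ref{FkF} for the $(\Rightarrow)$ directions, and Lemma~\ref{omega} to identify $\om_\La\ch$ with $\La\*_{\tA}(\om_{\tA}\ch\*_{\tA}\kI)$ for the \rec\ clause. The differences lie only in the $(\Leftarrow)$ sub-steps of the \wrc\ part, and they cut both ways.

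For $\rH^1(E,I)=0$ you go through formal functions and a Mittag-Leffler--style surjectivity argument on the inverse system $\{\rH^1(E,\kI/\kI^n)\}$. This works, but the paper's route is a one-liner: from $0\to\kI^2\to\kI\to I\to 0$ one gets $\rH^1(\tX,\kI)\to\rH^1(E,I)\to\rH^2(\tX,\kI^2)$, and $\rH^2(\tX,\_)=0$ since $\pi$ has fibre dimension $\le 1$. Your limit argument ultimately rests on the same vanishing (via $\rH^2(E,I_n)=0$), just packaged less directly. Conversely, your deduction that $I$ is \ggg\ (``images of generic generators of $\kI$ are generic generators of $I$'') is more elementary than the paper's, which first establishes $\rH^1(\tX,\kI^2)=0$ by a Proposition~\ref{zero}--style argument in order to force $\Ga(\tX,\kI)\twoheadrightarrow\Ga(E,I)$. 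Your observation that surjectivity of \emph{all} global sections is not needed---a subset that generically generates suffices---short-circuits that step.
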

\begin{proof}
 If $\kI$ is a \wrc, (1) and (2) follows from Proposition~\ref{FkF}. Conversely, suppose that
 (1) and (2) hold. Since $\rH^2(\tX,\_)=0$, then $\rH^1(E,I)=0$. Moreover, just as in
 Proposition~\ref{zero}, $\rH^1(\tX,\kI\*_{\tA}\kF)=0$ for any \ggg\ $\kF$. In particular,
 $\rH^1(\tX,\kI^2)=0$. Hence the map $\Ga(\tX,\kI)\to\Ga(E,I)$ is surjective, so $I$ is \ggg.
 
 Now let $\kI$ be a \wrc. Note that, by Lemma~\ref{omega},
 \begin{align*}
  \om\ch_\La&=\Hom_\La(\kI\ch\*_{\tA}\om_{\tA}\*_{\tA}\La,\La)\simeq \\
  			&\simeq\Hom_{\tA}(\kI\ch\*_{\tA}\om_{\tA},\La)
  			\simeq\Hom_{\tA}(\om_{\tA},\kI\*_{\tA}\La)\simeq\\
  			&\simeq\om\ch_{\tA}\*_{\tA}\kI\*_{\tA}\La
  			\simeq\om\ch_{\tA}\*_{\tA}\kI/\om\ch_{\tA}\*_{\tA}\kI^2.
 \end{align*}
 Hence, by Proposition~\ref{FkF}, $\om\ch_\La$ is \ggg\ \iff so is $\om\ch_{\tA}\*_{\tA}\kI$.
\end{proof}

 \begin{prop}\label{exist} 
    A \rec\ always exists.
  \end{prop}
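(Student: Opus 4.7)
The plan is to construct $\kI$ as $\kO_{\tX}(-nZ)\otimes_{\kO_{\tX}}\tA$, viewed as a two-sided ideal of $\tA$ via the inclusion $\kO_{\tX}(-nZ)\hookrightarrow\kO_{\tX}$, for a suitable effective cycle $Z$ supported on $E$ and a sufficiently large $n$. Once $\kO_{\tX}(-Z)$ is arranged to be $\pi$-ample, Serre-type vanishing and generation for the projective morphism $\pi$ will deliver all three conditions of Corollary~\ref{reduct-2} at once, applied to the coherent sheaves $\tA$ and $\om_{\tA}\ch$.

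The first step is to produce $Z=\sum_i n_iE_i$ with every $n_i\ge 1$ such that $\kO_{\tX}(-Z)$ is $\pi$-ample. Because the intersection matrix $M=(E_i\cdot E_j)$ is negative-definite with non-negative off-diagonal entries, $-M$ is a non-singular M-matrix, and the connectedness of $E$ makes $-M$ irreducible, so $(-M)^{-1}$ has strictly positive entries. Hence $Mn=-k\mathbf{1}$ admits a positive integer solution $n$ for some $k>0$, giving $Z\cdot E_j=-k<0$ for every $j$; equivalently, $\kO_{\tX}(-Z)|_E$ has positive degree on every component of $E$, so it is ample on $E$ and therefore $\pi$-ample.

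Having fixed such $Z$, the second step is to invoke Serre's vanishing and relative global generation for the projective $\pi$ and the $\pi$-ample line bundle $\kO_{\tX}(-Z)$: for $n\gg 0$, both $\kO_{\tX}(-nZ)\otimes_{\kO_{\tX}}\tA$ and $\kO_{\tX}(-nZ)\otimes_{\kO_{\tX}}\om_{\tA}\ch$ have vanishing $R^1\pi_*$ and are generated by their global sections as $\kO_{\tX}$-modules, hence \emph{a fortiori} as $\tA$-modules. Because $X$ is affine, the vanishing translates into $\rH^1\bigl(\tX,\kO_{\tX}(-nZ)\otimes_{\kO_{\tX}}\tA\bigr)=0$.

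Setting $\kI:=\kO_{\tX}(-nZ)\otimes_{\kO_{\tX}}\tA$ for such an $n$, everything falls into place via Corollary~\ref{reduct-2}: locally $\kO_{\tX}(-nZ)$ is generated by a single $f\in\kO_{\tX}\sbe\cen(\tA)$, so $\kI=f\tA=\tA f$ is locally bi-principal, and $\supp(\tA/\kI)=|nZ|=E$ since every $n_i\ge 1$; the identification $\om_{\tA}\ch\*_{\tA}\kI\simeq\om_{\tA}\ch\*_{\kO_{\tX}}\kO_{\tX}(-nZ)$ yields condition (3), while (1) and (2) come from the previous paragraph. The only genuinely non-routine point, and what I expect to be the main obstacle, is arranging the $\pi$-ample effective cycle with \emph{full} support on $E$; once this is in hand, the non-commutative structure of $\tA$ enters only through the containment $\kO_{\tX}\sbe\cen(\tA)$, and everything else is standard projective-morphism machinery.
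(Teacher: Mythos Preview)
Your proof is correct and follows essentially the same route as the paper: choose an effective divisor $Z$ supported on $E$ with $\kO_{\tX}(-Z)$ ample (using negative definiteness of the intersection form), set $\kI=\tA(-nZ)$ for $n\gg0$, and invoke Serre vanishing/generation together with Corollary~\ref{reduct-2}. Your version is somewhat more explicit---you spell out the M-matrix argument for the existence of $Z$ and note that global generation over $\kO_{\tX}$ implies global generation over $\tA$---but the idea is identical.
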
 
  \begin{proof}
  Since the intersection form is negative definite on the group of divisors on $\tX$ with support
  $E$ \cite{lip}, there is a divisor $D$ with support $E$ such that $\kO_{\tX}(-D)$ is ample. 
  Therefore, for some $n>0$, $\kI=\tA(-nD)$ as well as $\om\ch_{\tA}(-nD)$ are \ggg and,
  moreover, $\rH^1(\tX,\kI)=0$. Obviously, $\kI$ is bi-principal, so it is a \rec.
  \end{proof}
 
Now we need the following modification of the Wahl's lemma \cite[Lemma B.2]{wahl}.

\begin{lemma}\label{wahl} 
 If $\kF$ is a \vb\ over $\tA$, then
 \[ \textstyle
 \rH^1_E(\tX,\kF)\simeq \dlim_n\rH^0(E,\kI^{-n}\*_{\tA}F_n)
 \]
 Moreover, the natural homomorphisms 
 $$
 \rH^0(E,\kI^{-n}\*_{\tA}F_n)\to\rH^0(E,\kI^{-n-1}\*_{\tA}F_{n+1})
 $$
 are injective.
\end{lemma}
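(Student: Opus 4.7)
The plan is to exhibit $\rH^0(E,\kI^{-n}\*_\tA F_n)$ as the kernel of a natural map $\rH^1_E(\tX,\kF)\to\rH^1_E(\tX,\kI^{-n}\*_\tA\kF)$ coming from a short exact sequence of sheaves, and then pass to the direct limit on the right. First I tensor the resolution $0\to\kI^n\to\tA\to\La_n\to 0$ on the right by $\kI^{-n}\*_\tA\kF$; this stays exact because $\kF$ is locally projective and $\kI^{-n}$ is locally isomorphic to $\tA$ (as $\kI$ is locally bi-principal, making the natural pairing $\kI^{-n}\*_\tA\kI^n\ito\tA$). The result is
\[
 0\to\kF\to\kI^{-n}\*_\tA\kF\to\kI^{-n}\*_\tA F_n\to 0,
\]
using $\La_n\*_\tA\kF=F_n$. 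Both $\kF$ and $\kI^{-n}\*_\tA\kF$ are vector bundles, hence torsion-free as $\kO_{\tX}$-modules, so $\rH^0_E$ kills them; the third term is supported on $E$, so $\rH^0_E$ there agrees with $\rH^0(E,-)$. The local cohomology long exact sequence thus yields
\[
 0\to\rH^0(E,\kI^{-n}\*_\tA F_n)\to\rH^1_E(\tX,\kF)\to\rH^1_E(\tX,\kI^{-n}\*_\tA\kF).
\]

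The crucial step is to show $\dlim_n\rH^1_E(\tX,\kI^{-n}\*_\tA\kF)=0$. The inclusions $\kI^{-n}\hookrightarrow\kI^{-n-1}$ make $\{\kI^{-n}\*_\tA\kF\}$ a direct system whose colimit is $j_*j^*\kF$, where $j\colon\tX\setminus E\hookrightarrow\tX$ is the open immersion (locally, $\kI=a\tA$ and the colimit is $\kF[a^{-1}]$). Since local cohomology commutes with filtered direct limits of quasi-coherent sheaves on the noetherian scheme $\tX$, it suffices to show $\rH^1_E(\tX,j_*j^*\kF)=0$. I would apply the Grothendieck triangle $R\underline\Gamma_E\to\id\to Rj_*j^*$ to $j_*j^*\kF$: the map $j_*j^*\kF\to R^0j_*j^*(j_*j^*\kF)=j_*j^*\kF$ is the identity (using $j^*j_*=\id$), so the four-term exact sequence of cohomology sheaves forces $\underline{\rH}^0_E(j_*j^*\kF)=\underline{\rH}^1_E(j_*j^*\kF)=0$. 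The spectral sequence $E_2^{p,q}=\rH^p(\tX,\underline{\rH}^q_E(-))\Rightarrow\rH^{p+q}_E(\tX,-)$ then yields $\rH^1_E(\tX,j_*j^*\kF)=0$, completing the first claim.

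For the second claim, the inclusions $\kI^{-n}\hookrightarrow\kI^{-n-1}$ induce a morphism between the short exact sequences above that is the identity on the leftmost $\kF$; the snake lemma (or direct inspection) shows the resulting map $\kI^{-n}\*_\tA F_n\to\kI^{-n-1}\*_\tA F_{n+1}$ is injective as a map of sheaves, hence so is the induced map on $\rH^0(E,-)$. Alternatively, the same commuting diagram factors the embedding $\rH^0(E,\kI^{-n}\*_\tA F_n)\hookrightarrow\rH^1_E(\tX,\kF)$ through $\rH^0(E,\kI^{-n-1}\*_\tA F_{n+1})$, which forces the transition to be injective. The main obstacle is the vanishing step of the second paragraph; once one accepts that local cohomology commutes with direct limits, the computation of $R\underline\Gamma_E(j_*j^*\kF)$ is routine and independent of the non-commutativity of $\tA$, since all sheaves in play are quasi-coherent over $\tX$.
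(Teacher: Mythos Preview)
Your proof is correct and takes a genuinely different route from the paper's. The paper starts from the identification $\rH^1_E(\tX,\kF)\simeq\dlim_n\ext^1_{\tA}(\La_n,\kF)$ and then identifies each term: the local-to-global spectral sequence (using $\Hom_{\tA}(\La_n,\kF)=0$) gives $\ext^1_{\tA}(\La_n,\kF)\simeq\rH^0(E,\Ext^1_{\tA}(\La_n,\kF))$, and the two-term resolution $0\to\kI^n\to\tA\to\La_n\to0$ gives $\Ext^1_{\tA}(\La_n,\kF)\simeq(\kI^{-n}/\tA)\*_{\tA}\kF\simeq\kI^{-n}\*_{\tA}F_n$; injectivity of the transition maps is read off from the sheaf inclusion $(\kI^{-n}/\tA)\*_\tA\kF\hookrightarrow(\kI^{-n-1}/\tA)\*_\tA\kF$. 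You instead apply $\rH^\bullet_E$ directly to the short exact sequence $0\to\kF\to\kI^{-n}\*_\tA\kF\to\kI^{-n}\*_\tA F_n\to 0$ (which is the same sequence the paper uses, just produced differently), realising each $\rH^0(E,\kI^{-n}\*_\tA F_n)$ as the kernel of $\rH^1_E(\tX,\kF)\to\rH^1_E(\tX,\kI^{-n}\*_\tA\kF)$, and then show these kernels exhaust via $\rH^1_E(\tX,j_*j^*\kF)=0$. Your approach avoids the spectral sequence and gets injectivity for free (the terms are nested subspaces of a fixed group); the price is the colimit identification $\dlim_n(\kI^{-n}\*_\tA\kF)\simeq j_*j^*\kF$, which in the non-commutative setting deserves one more sentence---it amounts to the cofinality of the $\kI$-adic and $I_E$-adic filtrations on $\tA$, precisely the fact that also underlies the paper's opening formula.
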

\begin{proof}
 Note that $\rH^1_E(\tX,\kF)\simeq\dlim_n\ext^1_{\tA}(\La_n,\kF)$. Consider the spectral
 sequence $\rH^p(\tX,\Ext^q_{\tA}(\La_n,\kF))\Arr\ext^{p+q}_{\tA}(\La_n,\kF)$. Since
 $\Hom_{\tA}(\La_n,\kF)=0$, the exact sequence of the lowest terms gives an isomorphism
 $\ext^1_{\tA}(\La_n,\kF)\simeq\rH^0(E,\Ext^1_{\tA}(\La_n,\kF))$. Applying 
 $\Hom_{\tA}(\_,\kF)$ to the exact sequence $0\to\kI^n\to\tA\to\La_n\to0$, we get
 the exact sequence
 \[
 0\to \kF=\tA\*_{\tA}\kF\to \Hom_{\tA}(\kI^n,\kF)\simeq \kI^{-n}\*_{\tA}\kF
  \to\Ext^1_{\tA}(\La_n,\kF)\to0,
 \]
 whence $\Ext^1_{\tA}(\La_n,\kF)\simeq (\kI^{-n}/\tA)\*_{\tA}\kF$.
 Moreover, since $\kI^{-n}/\tA\sbe\kI^{-n-1}/\tA$ and $\kF$ is locally projective,
 we get an embedding $(\kI^{-n}/\tA)\*_{\tA}\kF\emb(\kI^{-n-1}/\tA)\*_{\tA}\kF$,
 hence an embedding of cohomologies. It remains to note that 
 $$
 (\kI^{-n}/\tA)\*_{\tA}\kF\simeq (\kI^{-n}/\tA)\*_{\La_n}F_n\simeq\kI^{-n}\*_{\tA} F_n,
 $$ 
 since $\kI^n$ annihilates $\kI^{-n}/\tA$.
 \end{proof}
 
 Since $I\*_\La F\simeq\kI\*_{\tA}F$, there is an exact sequence
 \[
  0\to \kI\*_{\tA}F\to F_2\to F\to 0.
 \]
 Multiplying it with $\kI\ch$, we get an exact sequence
 \begin{equation}\label{seq} 
    0\to F\to \kI\ch\*_{\tA}F_2\to \kI\ch\*_{\tA}F\to 0,
 \end{equation}
 which gives the coboundary map $\th_F:\rH^0(E,\kI\ch\*_{\tA}F)\to\rH^1(E,F)$.
 
\begin{prop}[Cf. {\cite[Proposition 1.6]{Kahn}}]\label{full-2}
 Let $\kI$ be a \wrc. A \vb\ $\kF\in\VB(\tA)$ is full \iff 
 \begin{enumerate}
 \item  $F$ is \ggg\ over $\La$.
 \item  The coboundary map $\th_F$ is injective.
 \end{enumerate}
\end{prop}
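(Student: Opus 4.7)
The plan is to connect the two criteria via the short exact sequence $0\to\kI\*_\tA\kF\to\kF\to F\to 0$ and Lemma~\ref{wahl}. First, Proposition~\ref{FkF} shows that condition~(1) here is equivalent to requiring both~(a) of Theorem~\ref{full} (namely that $\kF$ is \ggg) and the vanishing $\rH^1(\tX,\kI\*_\tA\kF)=0$. The latter, inserted in the long exact sequence, yields an injection $\rH^1(\tX,\kF)\emb\rH^1(E,F)$. Consequently, under~(1), condition~(b$'$) of Theorem~\ref{full}---injectivity of $\al_\kF$---is equivalent to injectivity of the composite
\[
\beta_\kF:\rH^1_E(\tX,\kF)\xarr{\al_\kF}\rH^1(\tX,\kF)\emb\rH^1(E,F),
\]
so fullness of $\kF$ becomes the conjunction of~(1) with injectivity of $\beta_\kF$.

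Next, Wahl's Lemma~\ref{wahl} expresses $\rH^1_E(\tX,\kF)=\dlim_n\rH^0(E,\kI^{-n}\*_\tA F_n)$ with injective transition maps, so $\beta_\kF$ is injective \iff each map $\rH^0(E,\kI^{-n}\*_\tA F_n)\to\rH^1(E,F)$ is injective. The key identification is that at level~$1$ this map equals $\th_F$: indeed, the proof of Lemma~\ref{wahl} identifies $\ext^1_\tA(\La,\kF)\simeq\rH^0(E,\kI\ch\*_\tA F)$, and the coboundary of an extension class so realized, composed with $\al_\kF$ and the restriction to $\rH^1(E,F)$, is by naturality precisely the coboundary of the sequence~(\ref{seq}), namely $\th_F$. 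Combined with the observation that any full $\kF\simeq\pi\dg\pi_*\kF$ automatically satisfies~(1) (since $\kF$ is \ggg\ over $\tA$ by Theorem~\ref{full}, hence $F$ is \ggg\ over $\La$), this settles the implication fullness $\Rightarrow$~(2) and reduces the reverse implication to propagating injectivity from level~$1$ to all higher levels.

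For higher levels I would argue by induction on~$n$ using the short exact sequence
\[
0\to (\kI^{-n}/\tA)\*_\tA\kF\to (\kI^{-n-1}/\tA)\*_\tA\kF\to I_{-n-1}\*_\La F\to 0
\]
obtained from $0\to\kI^{-n}/\tA\to\kI^{-n-1}/\tA\to I_{-n-1}\to 0$ and the local projectivity of $\kF$. Cohomology vanishings of the form $\rH^1(E,I_m\*_\La F)=0$, supplied by Proposition~\ref{zero} applied to suitable twists of $F$ (which remain \ggg\ thanks to~(1)), should guarantee that the connecting coboundaries at successive levels fit compatibly with $\beta_\kF$. The main obstacle I anticipate is exactly this compatibility: a careful diagram chase is needed to confirm that no new element of $\ker\beta_\kF$ appears upon passing from level~$n$ to level $n+1$, paralleling the commutative argument of~\cite[Proposition~1.6]{Kahn}.
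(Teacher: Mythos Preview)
Your setup in the first two paragraphs is correct and matches the paper: condition~(1) is equivalent to $\kF$ being \ggg\ together with $\rH^1(\tX,\kI\*_{\tA}\kF)=0$, the map $p:\rH^1(\tX,\kF)\to\rH^1(E,F)$ is then an isomorphism, and the commutative square identifying $p\circ\al_\kF\circ i$ with $\th_F$ gives the forward implication.

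The gap is in the converse. Your inductive plan in the third paragraph does not go through as stated. The vanishings you invoke from Proposition~\ref{zero} are $\rH^1(E,I_m\*_\La F)=0$ for $m\ge1$, but the graded pieces appearing in your filtration are $I_{-n-1}\*_\La F$ with \emph{negative} index, and for those there is no reason to expect $\rH^1$ to vanish (indeed $\rH^1(E,I_{-1}\*_\La F)$ is the target of $\th_F$ and is typically nonzero). More seriously, even granting some vanishing, your outline gives no mechanism for showing that the new piece $\rH^0(E,I_{-n-1}\*_\La F)$ injects into $\rH^1(E,F)$ compatibly with the previous level; the ``careful diagram chase'' you anticipate has no obvious starting point.

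The paper's key idea, which you are missing, is not to propagate injectivity of $\beta_\kF$ level by level, but rather to show that the transition maps in Lemma~\ref{wahl} are \emph{isomorphisms}, so that $\rH^1_E(\tX,\kF)$ already equals the level-$1$ term $\rH^0(E,\kI\ch\*_{\tA}F)$ and $\al_\kF$ is literally $\th_F$. Concretely, from the exact sequence
\[
0\to\kI^{-n}\*_{\tA}F_n\to\kI^{-n-1}\*_{\tA}F_{n+1}\to\kI^{-n-1}\*_{\tA}F\to0
\]
it suffices to prove that the connecting map $\be'_n:\rH^0(E,\kI^{-n-1}\*_{\tA}F)\to\rH^1(E,\kI^{-n}\*_{\tA}F)$ is injective. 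Here is the trick: since $\kI^n$ is \ggg\ (Proposition~\ref{zero}), there is a map $m\tA\to\kI^n$ with discrete cokernel; dualizing gives an embedding $\kI^{-n}\emb m\tA$. Tensoring the sequence~\eqref{seq} with this embedding produces a commutative square whose bottom row is $m\th_F$ and whose left vertical arrow is injective; hence $\be'_n$ is injective because $\th_F$ is. This dualization step is the essential move.
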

\begin{proof}
 Let $\kF$ be \ggg. Since $\rH^1(\tX,\kI)=0$, also $\rH^1(\tX,\kI\*_{\tA}\kF)=0$.
 Therefore, the map $\Ga(\tX,\kF)\to\Ga(E,F)$ is surjective, so $F$ is \ggg. Conversely, if
 $F$ is \ggg, so is $\kF$ by Proposition~\ref{FkF}. Hence, this condition (1) is equivalent to the
 condition (1) of Proposition~\ref{full}. So now we suppose that both $\kF$ and $F$ are \ggg.
 
 Consider the commutative diagram
\[
\begin{CD}
  \rH^1_E(\tX,\kF) @>\al_\kF>> \rH^1(\tX,\kF) \\
   @AiAA	@VpVV \\
  \rH^0(E,\kI\ch\*_{\tA}F) @>\th_F>> \rH^1(E,F)
\end{CD}
\]
 Here $i$ is an embedding from Lemma~\ref{wahl} and $p$ is an isomorphism, since
 $\rH^1(\tX,\kI\*_{\tA}\kF)=0$. If $\kF$ is full, $\al_\kF$ is injective, hence so is $\th_F$.
 
 Conversely, suppose that $\th_F$ is injective. We show that all embeddings
 \begin{equation}\label{embed} 
 \rH^0(E,\kI^{-n}\*_{\tA}F_n) \to\rH^0(E,\kI^{-n-1}\*_{\tA}F_{n+1}) 
 \end{equation}
 from Lemma~\ref{wahl} are actually isomorphisms. It implies that $\al_\kF$ is injective,
 so $\kF$ is full.

 The map \eqref{embed} comes from the exact sequence
 \begin{equation}\label{seq2} 
 0\to \kI^{-n}\*_{\tA}F_n\to\kI^{-n-1}\*_{\tA}F_{n+1} \to \kI^{-n-1}\*_{\tA}F\to0 
 \end{equation}
 obtained from the exact sequence
 \[
  0\to \kI\*_{\tA}F_n\to F_{n+1}\to F\to 0 
 \]
 by tensoring with $\kI^{-n-1}$. So we have to show that the connecting homomorphism
 \[
 \be_n: \rH^0(E,\kI^{-n-1}\*_{\tA}F)\to \rH^1(E,\kI^{-n}\*_{\tA}F_n) 
 \]
 is injective. We actually prove that even the map 
 \[
  \be'_n: \rH^0(E,\kI^{-n-1}\*_{\tA}F)\to \rH^1(E,\kI^{-n}\*_{\tA}F), 
 \] 
 which is the composition of $\be_n$ with the natural map
 $\rH^1(E,\kI^{-n-1}\*_{\tA}F_n)\to\rH^1(E,\kI^{-n}\*_{\tA}F)$ is injective.
 
 Indeed, $\be_0$ coincides with $\th_F$. Since all sheaves $\kI^n$ are \ggg, there is a
 homomorphism $m\tA\to \kI^n$ whose cokernel has a finite support. Taking duals, we get an 
 embedding $\kI^{-n}\emb m\tA$. Tensoring this embedding with the exact sequence \eqref{seq2} 
 for $n=0$ and taking cohomologies, we get a commutative diagram
 \[
\begin{CD}
   \rH^0(E,\kI^{-n-1}\*_{\tA}F) @>\be'_n>> \rH^1(E,\kI^{-n}\*_{\tA}F) \\
     @VVV @VVV \\
   m\rH^0(E,\kI^{-1}\*_{\tA}F) @>>> m\rH^1(E,F)
 \end{CD} 
 \]
 where the second horizontal and the first vertical maps are injective. Therefore, $\be'_n$
 is injective too, which accomplishes the proof.
\end{proof}

 We call a \vb\ $F\in\VB(\La)$ \emph{full} if $F\simeq\La\*_{\tA}\kF$, where $\kF$ is a
 full \vb\ over $\tA$.
 
 \begin{theorem}[Cf. {\cite[Theorem~1.4]{Kahn}}]\label{the} 
  Let $\kI$ be a \wrc. A \vb\ $F\in\VB(\La)$ is full \iff 
  \begin{enumerate}
  \item $F$ is \ggg.
  \item There is a \vb\ $F_2\in\VB(\La_2)$ such that $\La\*_{\tA}F_2\simeq F$ and
  the connecting homomorphism
  $\th_F:\rH^0(E,\kI\ch\*_{\tA}F)\to\rH^1(E,F)$ coming from the exact sequence
  \eqref{seq} is injective.
  \end{enumerate}
  If, moreover, $\kI$ is a \rec, the full \vb\ $\kF\in\VB(\tA)$ such that 
  $\La\*_{\tA}\kF\simeq F$ is unique up to isomorphism. Thus the reduction functor $R_\kI$
  induces a \oc\ between isomorphism classes of \cm\ $A$-modules and
  isomorphism classes of full \vb s over $\La$.
 \end{theorem}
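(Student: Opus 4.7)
The plan is to establish the ``if'' direction of the first claim by inductively constructing a compatible tower of lifts $F_n\in\VB(\La_n)$, starting from the given $F_1=F$ and $F_2$, algebraizing the resulting formal system to some $\kF\in\VB(\tA)$, and invoking Proposition~\ref{full-2} to conclude that $\kF$ is full; the uniqueness statement under the \rec\ hypothesis follows from a parallel deformation-theoretic argument in which the obstructions are killed using condition~(3). The ``only if'' direction is immediate: if $F\simeq\La\*_{\tA}\kF$ for a full $\kF\in\VB^f(\tA)$, set $F_2:=\La_2\*_{\tA}\kF$; then condition (1) and injectivity of $\th_F$ are exactly the content of Proposition~\ref{full-2}.

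For the ``if'' direction, I proceed inductively: given $F_n$, I extend it to some $F_{n+1}\in\VB(\La_{n+1})$ with $\La_n\*_{\La_{n+1}}F_{n+1}\simeq F_n$. The obstruction lies in $\ext^2_{\La_n}(F_n,I_n\*_\La F)$; since $F_n$ is locally projective over $\La_n$, this equals $\rH^2(E,\Hom_{\La_n}(F_n,I_n\*_\La F))$, which vanishes because $E$ is a projective curve. The compatible system $(F_n)_{n\ge 1}$ then determines a coherent sheaf $\hat\kF$ on the formal completion $(\hat\tX,\hat\tA)$ of $(\tX,\tA)$ along $E$. By the algebraization principle for non-commutative formal schemes noted at the end of Section~\ref{sec1}, combined with the fact that $\pi$ is an isomorphism over $\oX$ (so $\kF|_{\oX}$ may be prescribed as the inverse image of a chosen $A|_{\oX}$-module), one obtains $\kF\in\VB(\tA)$ realizing $\hat\kF$ upon completion along $E$ and satisfying $\La\*_{\tA}\kF\simeq F$ and $\La_2\*_{\tA}\kF\simeq F_2$. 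Since $F$ is \ggg, Proposition~\ref{FkF} gives that $\kF$ is \ggg, and the $\th_F$ produced from $\kF$ via Proposition~\ref{full-2} coincides with the given injective one; hence Proposition~\ref{full-2} yields that $\kF$ is full.

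For uniqueness under the \rec\ hypothesis, let $\kF,\kF'\in\VB^f(\tA)$ satisfy $\La\*_{\tA}\kF\simeq\La\*_{\tA}\kF'\simeq F$, and write $F_n:=\La_n\*_{\tA}\kF$ and $F'_n:=\La_n\*_{\tA}\kF'$. I inductively lift the given isomorphism $F\simeq F'$ through the tower; the obstruction to extending an isomorphism $F_n\simeq F'_n$ to $F_{n+1}\simeq F'_{n+1}$ lies in $\ext^1_{\La_n}(F_n,I_n\*_\La F)$. Via Serre duality on $\La$ (Definition~\ref{La}) this group is expressed in terms of $\om_\La$ and twists by $\kI^{-n}$, and the \rec\ hypothesis that $\om\ch_\La$ is \ggg, combined with the positivity mechanism exploited for $\be'_n$ in the proof of Proposition~\ref{full-2} (via an embedding $\kI^{-n}\emb m\tA$), forces these obstructions to vanish in the inverse limit. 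The resulting formal isomorphism $\hat\kF\simeq\hat\kF'$ algebraizes to $\kF\simeq\kF'$; combined with Theorem~\ref{full}(1) this yields the asserted \oc. The main obstacle will be this uniqueness step: precisely identifying the obstruction spaces for isomorphism lifts in the non-commutative setting and verifying that the relatively weak hypothesis that $\om\ch_\La$ be \ggg\ suffices to kill them in the inverse limit; the existence of $\kF$ is comparatively routine, resting on the vanishing of $\ext^2$-obstructions on the projective curve $E$ together with formal algebraization.
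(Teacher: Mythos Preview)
Your existence argument and the ``only if'' direction are essentially the paper's: the paper lifts projectives explicitly over a two-chart affine cover of $E$ rather than invoking $\ext^2=0$, but the content is identical, and algebraization plus Proposition~\ref{full-2} finishes as you say.

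The uniqueness argument, however, has a real gap. The class in $\ext^1_\La(F,I_n\*_\La F)$ measuring the difference between the two liftings $F_{n+1},F'_{n+1}$ of a given $F_n$ does \emph{not} vanish in general, and no ``inverse limit'' consideration makes it vanish. What the paper actually proves is that the action of $\aut F_n$ on the set of liftings is transitive. Concretely, it introduces the connecting map $\de_n:\hom_{\La_n}(F_n,F_n)\to\ext^1_\La(F,I_n\*_\La F)$ arising from the square-zero extension \eqref{seq1} and shows that its restriction $\de'_n$ to the subgroup $\hom_\La(F,I_{n-1}\*_\La F)$ is already surjective. The crucial ingredient you omit is that this surjectivity comes from Serre-dualizing the \emph{injectivity of $\th_F$}: the \rec\ hypothesis that $\om_\La\ch$ is \ggg\ yields a map $m\om_\La\to I_{n-1}\*_\La F$ with zero-dimensional cokernel, and in the resulting commutative square the top row is $m$ copies of the Serre dual $\th_F^*$, which is surjective precisely because $\th_F$ is injective. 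The \rec\ condition by itself, without feeding $\th_F$ back in, does not suffice; your sketch never invokes $\th_F$ in the uniqueness step.

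There is also a subtlety at $n=1$ that your outline misses. For $n>1$ every element $1+\vi$ with $\vi\in\hom_\La(F,I_{n-1}\*_\La F)$ is automatically invertible, so surjectivity of $\de'_n$ immediately gives transitivity. For $n=1$ one only knows that the invertible elements form an \emph{open} subset of $\End_\La F$, so their image under $\de$ is an open orbit in $\ext^1_\La(F,I\*_\La F)$. The paper then argues that any $F_2$ satisfying condition~(2) corresponds to such an open orbit, and since there is at most one open orbit, any two choices of $F_2$ are isomorphic. This step is where condition~(2) on $F_2$ is used for uniqueness, and it cannot be replaced by a simple obstruction-vanishing statement.
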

\begin{proof}
 Let $\kI$ be a \wrc, $F$ satisfies (1) and (2). If $U\sb E$ is an affine open subset, 
 there is an exact sequence
 \[
  0\to I_n(U)\to \La_{n+1}(U)\to \La_n(U)\to 0,
 \]
 where the ideal $I_n(U)$ is nilpotent (actually, $I_n(U)^2=0$). Therefore, given a projective
 $\La_n(U)$-module $P_n$, there is a projective $\La_{n+1}(U)$-module $P_{n+1}$ such that
 $\La_n(U)\*_{\La_{n+1}(U)}P_{n+1}\simeq P_n$. Moreover, if $P'_n$ is another projective 
 $\La_n(U)$-module, $P'_{n+1}$ is a projective $\La_{n+1}(U)$-module such that
 $\La_n(U)\*_{\La_{n+1}(U)}P'_{n+1}\simeq P'_n$ and $\vi_n:P_n\to P'_n$ is a homomorphism, it can
 be lifted to a homomorphism $\vi_{n+1}:P_{n+1}\to P'_{n+1}$, and if $\vi_n$ is an isomorphism,
 so is $\vi_{n+1}$ too.
 
 Consider an affine open cover $E=U_1\cup U_2$. Let $P_{2,i}=F_2(U_i)$. Iterating the above procedure,
 we get projective $\La_n(U_i)$-modules $P_{n,i}$ such that 
 $$
 \La_n(U_i)\*_{\La_{n+1}(U_i)}P_{n+1,i}\simeq P_{n,i}
 $$
 for all $n\ge2$. If $U=U_1\cap U_2$, there is an isomorphism $\vi_2:P_{2,1}(U)\ito P_{2,2}(U)$.
 It can be lifted to $\vi_n:P_{n,1}(U)\ito P_{n,2}(U)$ so that the restriction of $\vi_{n+1}$ to
 $P_{n,1}$ coincides with $\vi_n$. Hence there are \vb s $F_n$ over $\La_n$ such that
 $\La_n\*_{\tA}F_{n+1}\simeq F_n$. Taking inverse image, we get a \vb\ $\hat\kF=\ilim_nF_n$ over
 the formal non-commutative scheme $(\hat X,\hat A)$ which is the completion of $(\tX,\tA)$
 along the subscheme $E$. As $\tX$ is projective, hence proper over $X$, 
 $\hat\kF$ uniquely arises as the completion of a \vb\ $\kF$ over $\tA$ such that 
 $\La_n\*_{\tA}\kF\simeq F_n$ for all $n$ (see \cite[Theorem 5.1.4]{ega}). 
 If we choose $F_2$ so that the condition (2) holds, $\kF$ is full by Proposition~\ref{full-2}. 
 Thus $F$ is full as well.
 
 Let now $\kI$ be a \rec, $F$ be a full \vb\ over $\La$ and $F_n$ be \vb s over $\La_n$ such that
 $\La_n\*_{\tA}F_{n+1}\simeq F_n$ for all $n$ and $F_2$ satisfies the condition (2). As we have 
 already mentioned, all choices of $F_n$ are locally isomorphic. Therefore, if we fix one of them, 
 their isomorphism classes are in \oc\ with the cohomology set $\rH^1(E,\Aut F_n)$ \cite{gro}. 
 From the exact sequence \eqref{seq1} we obtain an exact sequence of sheaves of groups
 \[
   0\to \kH \to \Aut F_{n+1} \xarr\rho \Aut F_n\to 0,
 \]
 where $\kH=\ker\rho\simeq \Hom_{\La_n}(F_n,I_n\*_\La F)\simeq \Hom_\La(F,I_n\*_\La F)$. 
 It gives an exact sequence of cohomologies
 \begin{align*}
  0&\to \hom_\La(F_n,I_n\*_\La F)\to \aut F_{n+1}\to \aut F_n\xarr\de \\
   &\to \ext^1_\La(F,I_n\*_\La F)\to \rH^1(E,\Aut F_{n+1})\to \rH^1(E,\Aut F_n).
 \end{align*}
 The isomorphism classes of liftings $F_{n+1}$ of a given $F_n$ are in \oc\
 with the orbits of the group $\aut F_n$ naturally acting on $\ext^1_\La(F,I_n\*_\La F)$.
 \cite[Proposition 5.3.1]{gro}. 
 
 We write automorphisms of $F_n$ in the form $1+\vi$ for $\vi\in\End F_n$. 
 Then $\de(1+\vi)=\de_n(\vi)$, where $\de_n:\hom_{\La_n}(F_n,F_n)\to\ext^1_\La(F,I_n\*_{\La}F)$ 
 is the connecting homomorphism coming from the exact sequence \eqref{seq1}. We restrict $\de_n$ to
 $\hom_\La(F,I_{n-1}\*_\La F)$ (see the same exact sequence, with $n$ replaced by $n-1$). The 
 resulting homomorphism $\de'_n:\hom_\La(F,I_{n-1}\*_\La F)\to \ext^1_\La(F,I_n\*_\La F)$ coincides
 with the connecting homomorphism coming from the exact sequence \eqref{seq} tensored with 
 $\kI^{n-1}$. 
 
 \begin{claim}\label{claim} 
   $\de'_n$ is surjective.
 \end{claim}
 
 Indeed, since $F$, $I_{n-1}$ and $\om\ch_\La$ are \ggg, so is their tensor product. Hence, there is a
 homomorphism $m\La\to\om\ch_\La\*_\La I_{n-1}\*_\La F$, thus also 
 $m\om_\La\to I_{n-1}\*_\La F$ whose cokernel has discrete support. Applying $\hom_\La(F,\_)$, we
 get a commutative diagram
 \[
 \begin{CD}
   m\hom_\La(F,\om_\La) @>>> m\ext^1_\La(F,I\*_\La\om_\La) \\
   @VVV	@V{\eta}VV \\
   \hom_\La(F,I_{n-1}\*_\La F)@>{\de_n'}>> \ext^1_\La(F,I_n\*_\La F),
 \end{CD}
 \]
 where $\eta$ is surjective. Note that the first horizontal map here is the $m$-fold Serre 
 dual $\th^*_F$ to the map
 \[
  \th_F:\hom_\La(I,F)\simeq \rH^0(E,I\ch\*_\La F)\to\ext^1_\La(\La,F)\simeq\rH^1(E,F),
 \]
 which is injective. Therefore, $\th^*_F$ is surjective and so is also $\de'_n$.
 
 If $n>1$, every homomorphism $1+\vi$ with $\vi\in \hom_\La(F,I_{n-1}\*_\La F)$ is invertible.
 Hence $\de$ is surjective and $F_{n+1}$ is unique up to isomorphism. If $n=1$, the set
 $\{\,\vi\in \hom_\La(F,F)\,\mid\,1+\vi \text{ is invertible}\,\}$ is open in 
 $\aut F$. Therefore, its image in $\ext^1_\La(F,I\*_\La F)$ is an open orbit of $\aut F$. 
 If we choose another lifting $F'_2$ of $F$ so that the condition (2) holds, it also gives an open
 orbit. Since there can be at most one open orbit, they coincide, hence $F'_2\simeq F_2$. Now,
 if $\kF$ and $\kF'$ are two full \vb s over $\tA$ such that 
 $\La\*_{\tA}\kF \simeq\La\*_{\tA}\kF'\simeq F$, we can glue isomorphisms
 $\La_n\*_{\tA}\kF \ito\La_n\*_{\tA}\kF'$ into an isomorphism $\kF\ito \kF'$.
\end{proof}

 Claim~\ref{claim} also implies the following result.

\begin{corol}[Cf. {\cite[Corollary 1.10]{Kahn}}]
 If $\kF\in\VB^f(\tA)$, then $\ext^1_{\tA}(\kF,\kF)\simeq\ext^1_\La(F,F)$.
\end{corol}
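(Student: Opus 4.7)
The plan is to express $\ext^1_{\tA}(\kF,\kF)$ as an inverse limit of the groups $\ext^1_{\La_n}(F_n,F_n)$ via the theorem on formal functions, and then to use Claim~\ref{claim} to show the transition maps in this system are isomorphisms, so the limit coincides with $\ext^1_{\La_1}(F_1,F_1)=\ext^1_\La(F,F)$.

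Since $\kF\in\VB(\tA)$ is locally projective, the local Ext sheaves $\Ext^q_{\tA}(\kF,\kF)$ vanish for $q>0$, so $\ext^1_{\tA}(\kF,\kF)\simeq\rH^1(\tX,\END_{\tA}(\kF))$. Since $\pi:\tX\to X=\Sp R$ is proper and $R$ is complete local, the theorem on formal functions applied along $E$, together with cofinality of the $\kI$-adic and $\gM$-adic filtrations on $\END_{\tA}(\kF)$ (which follows from $\supp(\tA/\kI)=E$ and bi-principality of $\kI$), gives
\[
\ext^1_{\tA}(\kF,\kF)\simeq\ilim_n\rH^1(E,\END_{\tA}(\kF)/\kI^n\END_{\tA}(\kF))\simeq\ilim_n\ext^1_{\La_n}(F_n,F_n),
\]
where for the last identification I use $\END_{\tA}(\kF)\*_{\tA}\La_n\simeq\END_{\La_n}(F_n)$, a consequence of local projectivity of $\kF$.

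Next I would show that each transition map $\ext^1_{\La_{n+1}}(F_{n+1},F_{n+1})\to\ext^1_{\La_n}(F_n,F_n)$ is an isomorphism. Applying $\hom_{\La_{n+1}}(F_{n+1},\_)$ to the second exact sequence in \eqref{seq1} produces a long exact sequence. Using that $F_{n+1}$ is locally projective over $\La_{n+1}$, change of rings yields
\[
\ext^i_{\La_{n+1}}(F_{n+1},I_n\*_\La F)\simeq\ext^i_\La(F,I_n\*_\La F),\quad \ext^i_{\La_{n+1}}(F_{n+1},F_n)\simeq\ext^i_{\La_n}(F_n,F_n),
\]
and the $\ext^2$ terms vanish because $\dim E=1$. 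The relevant portion of the long exact sequence collapses to
\[
\hom_{\La_n}(F_n,F_n)\xarr{\de_n}\ext^1_\La(F,I_n\*_\La F)\to\ext^1_{\La_{n+1}}(F_{n+1},F_{n+1})\to\ext^1_{\La_n}(F_n,F_n)\to0,
\]
with $\de_n$ the same connecting map already featured in the proof of Theorem~\ref{the}. The transition map is therefore an isomorphism iff $\de_n$ is surjective.

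Finally, Claim~\ref{claim} asserts surjectivity of $\de'_n$, which, as recorded in the proof of Theorem~\ref{the}, is precisely the restriction of $\de_n$ to the subspace $\hom_\La(F,I_{n-1}\*_\La F)\sbe\hom_{\La_n}(F_n,F_n)$ (with $I_0=\La$ covering $n=1$). Surjectivity on a subspace forces surjectivity overall, so each transition is an isomorphism and the inverse system is essentially constant, yielding $\ext^1_{\tA}(\kF,\kF)\simeq\ext^1_\La(F,F)$. The main obstacle is really the bookkeeping around the change-of-rings identifications and verifying that the connecting map appearing in the long exact sequence above coincides with the $\de_n$ of the proof of Theorem~\ref{the}; both are standard diagram chases, leaving Claim~\ref{claim} as the substantive input.
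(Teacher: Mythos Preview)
Your argument is correct and is precisely the approach the paper has in mind: the paper states that Claim~\ref{claim} implies the Corollary and that the proof ``just copies that from \cite{Kahn},'' and Kahn's argument is exactly the inverse-limit computation via the Theorem on Formal Functions together with surjectivity of the connecting maps $\de_n$ (deduced from the surjectivity of the restrictions $\de'_n$) that you have written out.
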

 We omit the proof since it just copies that from \cite{Kahn}.

\section{Good elliptic case}
\label{sec3} 

 There is one special case when the conditions of Theorem~\ref{the} can be made much simpler.
 It is analogous to the case of \emph{minimally elliptic} surface singularities considered in
 \cite[Section 2]{Kahn}. We are not aware of the full generality when it can be done, so we
 only confine ourselves to a rather restricted situation. Thus the following definition shall be
 considered as very preliminary. It will be used in the examples studied in the next section.
  
 \begin{defin}\label{elliptic} 
 Let $\pi:(\tX,\tA)\to(X,A)$ be a resolution of a \ncss, $\kI$ be a \wrc\ and $\La=\tA/\kI$.
 We say that the \wrc\ $\kI$ is \emph{good elliptic} if $\La\simeq\kO_Z$ where $Z$ is a 
 reduced curve of arithmetic genus $1$ (hence $\om_Z\simeq\kO_Z$). Obviously, then $\kI$
 is a \rec.  If a \ncss\ $(X,A)$ has a resolution
 $(\tX,\tA)$ such that there is a good elliptic reduction cycle $\kI\sb\tA$, we say that $(X,A)$ is
 a \emph{good elliptic \ncss}.
 \end{defin}
 
 \begin{remk}\label{ell-2} 
 One easily sees that being good elliptic is equivalent to fulfilments of the following conditions
 for some resolution:
 \begin{enumerate}
 \item $\nH^1(\tX,\tA)=1$.
 \item There is a \wrc\ $\kI$ such that $\La$ is commutative and reduced.
 \end{enumerate}
 Then $\kI$ is also a reduction cycle.
 \end{remk}
 
 For good elliptic \ncss\ we can state a complete analogue of \cite[Theorem 2.1]{Kahn}. Moreover,
 the proof is just a copy of the Kahn's proof, so we omit it.
 
 \begin{theorem}\label{good} 
 Suppose that $\kI$ is a good elliptic reduction cycle for a resolution $(\tX,\tA)$ of a \ncss\
 $(X,A)$, $\La=\tA/\kI$ and $I=\kI/\kI^2$. A \vb\ $F$ over $\La$ is full \iff $F\simeq G\+m\La$,
 where the following conditions hold:
 \begin{enumerate}
 \item  $G$ is \ggg.
 \item  $\rH^1(E,G)=0$.
 \item  $m\ge \nH^0(E,I\ch\*_\La G)$.\!%
 \footnote{\,If we identify $\La$ with $\kO_Z$, then $I\ch\*_\La G$ is identified with $G(Z)$.}
 \end{enumerate}
 
  If these conditions hold and $M$ is the \cm\ $A$-module such that $F\simeq R_\kI M$, then $M$ is
 indecomposable \iff either $m=\nH^0(E,I\ch\*_\La G)$ or $F=\La$ \lb then $M=A$\rb.

 \end{theorem}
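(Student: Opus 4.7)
The plan is to apply Theorem~\ref{the} and exploit the drastic simplifications that the good elliptic hypothesis brings. Since $\La\simeq\kO_Z$ with $\om_Z\simeq\kO_Z$, Serre duality reads $\ext^1_\La(\kF,\kG)\simeq\rD\hom_\La(\kG,\kF)$ and $\nH^1(E,\La)\simeq\fK$. Under the footnote's identification $I\ch\*_\La\kF=\kF(Z)$, the sequence~\eqref{seq} becomes $0\to F\to F_2(Z)\to F(Z)\to 0$ with coboundary $\th_F:\rH^0(E,F(Z))\to\rH^1(E,F)$. A key numerical observation is that the normal bundle $\kO_Z(Z)\simeq I\ch$ has negative degree on each component of the exceptional curve $E$, so $\rH^0(E,\kO_Z(Z))=0$; this is the elliptic feature that makes condition~(3) exactly the right criterion.

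For sufficiency, suppose $F\simeq G\+m\La$ satisfies (1)--(3). Then $F$ is \ggg, and by Theorem~\ref{the} it remains to produce a lift $F_2\in\VB(\La_2)$ making $\th_F$ injective. I would fix any lift $G_2$ of $G$ (such lifts exist by the local-nilpotent obstruction argument appearing in the proof of Theorem~\ref{the}) and vary the lift $L_2$ of $m\La$, parametrised by its extension class in $\ext^1_\La(m\La,mI)\simeq\rD\hom_\La(mI,m\La)$. By~(2), $\rH^1(E,F)\simeq m\rH^1(E,\La)\simeq m\fK$, while by the computation above $\rH^0(E,F(Z))\simeq\rH^0(E,G(Z))$, of dimension at most $m$ by~(3). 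A generic choice of extension class then produces an injective $\th_F$, and Theorem~\ref{the} concludes that $F$ is full.

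For necessity, let $F$ be a full \vb. It is \ggg\ by Theorem~\ref{the}, so condition~(1) holds for any direct summand of $F$. On the reduced Gorenstein genus-$1$ curve $Z$ one may write $F\simeq G\+m\La$ with $\hom_\La(G,\La)=0$ by extracting the maximal free summand; Serre duality then gives $\rH^1(E,G)\simeq\rD\hom_\La(G,\La)=0$, which is~(2). The injectivity of $\th_F$ for the associated $F_2$ forces $h^0(E,G(Z))\le\dim\rH^1(E,F)=m$, i.e.\ condition~(3). For the indecomposability statement, the correspondence $M\leftrightarrow F$ of Theorem~\ref{the} respects direct sums, so $M$ is indecomposable \iff $F$ admits no nontrivial split $F=F'\+F''$ with both summands full. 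If $F=\La$ this is clear ($M=A$). If $m>\nH^0(E,I\ch\*_\La G)$ and $F\ne\La$, a single $\La$-summand is itself full and splits off, corresponding to an $A$-summand of $M$. Conversely, when $m=\nH^0(E,I\ch\*_\La G)$ and $G$ is indecomposable, any nontrivial split would be forced, by the tightness of~(3) on each summand, to induce a nontrivial splitting of $G$, a contradiction.

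The main obstacle I expect is the sufficiency step: showing that as $L_2$ varies over the parameter space $\ext^1_\La(m\La,mI)$ the resulting coboundary $\th_F$ sweeps out enough injective maps to include one such. This requires studying the natural map $\ext^1_\La(F(Z),F)\to\hom_\fK(\rH^0(E,F(Z)),\rH^1(E,F))$ and showing that its image is large enough to contain an injection, which is where the elliptic identification $\om_Z\simeq\kO_Z$ enters essentially via Serre duality to balance the cohomological dimensions on both sides.
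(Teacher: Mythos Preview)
The paper does not prove this theorem; it says ``the proof is just a copy of Kahn's proof, so we omit it.'' Your overall strategy---reduce to Theorem~\ref{the}, exploit $\om_Z\simeq\kO_Z$ via Serre duality, and analyse $\th_F$ against the splitting $F=G\+m\La$---matches Kahn's. Two steps, however, do not go through as written.

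In the necessity direction, extracting the maximal free summand gives only that $G$ has no $\La$-summand; it does \emph{not} immediately yield $\hom_\La(G,\La)=0$. The missing lemma is: if $G$ is \ggg\ and $0\ne\phi\colon G\to\La$, then the induced map $\rH^0(E,G)\to\rH^0(E,\La)=\fK$ is nonzero (otherwise $\phi$ kills the generic generators of $G$ and is zero), so $\phi$ is split surjective and $\La$ is a summand of $G$. Only with this step does ``no $\La$-summand'' upgrade to $\hom_\La(G,\La)=0$, after which Serre duality gives condition~(2).

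In the sufficiency direction your choice of what to vary is actually fatal. If $F_2=G_2\+L_2$ is block-diagonal, the sequence~\eqref{seq} splits as a direct sum and $\th_F=\th_G\+\th_{m\La}$; since $\rH^1(E,G)=0$ and $\rH^0(E,mI\ch)=0$, this forces $\th_F=0$, which is never injective once $\nH^0(E,I\ch\*_\La G)>0$. What must be varied is the \emph{off-diagonal} part of the lift, namely the component in $\ext^1_\La(G,mI)\simeq\ext^1_\La(I\ch\*_\La G,m\La)$. Serre duality (with $\om_\La\simeq\La$) identifies the cup-product pairing $\ext^1_\La(I\ch\*_\La G,\La)\times\rH^0(E,I\ch\*_\La G)\to\rH^1(E,\La)\simeq\fK$ with the canonical duality pairing, so this cross term realises \emph{every} linear map $\rH^0(E,I\ch\*_\La G)\to m\fK$ as the relevant block of $\th_F$; by condition~(3) an injective one exists. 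Your final paragraph anticipates that this is the crux, but the body of the argument points at the wrong parameter ($\ext^1_\La(m\La,mI)$ instead of $\ext^1_\La(G,mI)$).
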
 
 
 Now, just as in \cite{dgk} (and with the same proof), we obtain the following result.
 
 \begin{corol}\label{tame-wild} 
 Suppose that $\kI$ is a good elliptic reduction cycle for a resolution $(\tX,\tA)$ of a \ncss\
 $(X,A)$ and $\La=\tA/\kI\simeq\kO_Z$. The \ncss\ $(X,A)$ is \cm\ tame \iff $Z$ is either a smooth 
 elliptic curve or a Kodaira cycle \lb a \emph{cyclic configuration} in the sense of \cite{dgk}\rb. 
 Otherwise it is \cm\ wild.
 \end{corol}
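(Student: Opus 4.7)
The plan is to transfer the tame/wild dichotomy across Kahn's reduction. By Theorem \ref{the} the functor $R_{\kI}$ induces a bijection between isomorphism classes of indecomposable \cm\ $A$-modules and isomorphism classes of indecomposable full vector bundles over $\La\simeq\kO_Z$, so CM-representation type of $(X,A)$ coincides with the representation type of the category of full $\La$-vector bundles. By Theorem \ref{good}, every indecomposable full vector bundle over $\La$ is either $\La$ itself (corresponding to $M=A$) or of the form $G\+m\La$ with $G\ne0$ \ggg, $\rH^1(E,G)=0$, and $m=\nH^0(E,I\ch\*_\La G)$. Thus, after discarding the single module $A$, classification of indecomposable \cm\ $A$-modules reduces to classification of nonzero vector bundles $G$ on $Z$ satisfying conditions (1) and (2) of Theorem \ref{good}, since the summand $m\La$ is uniquely determined by $G$.

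Next I would invoke the classification of vector bundles on reduced projective curves of arithmetic genus $1$. For $Z$ a smooth elliptic curve this is Atiyah's classification, and for $Z$ a Kodaira cycle it is the Drozd--Greuel classification used in \cite{dgk}; in both cases indecomposable vector bundles, except for finitely many discrete families in each rank/degree, form one-parameter families, so the category of vector bundles on $Z$ is tame. For any other reduced $Z$ with $p_a(Z)=1$ (e.g.\ a rational cuspidal curve, a tacnodal configuration, a theta-configuration, or more generally a curve with a non-nodal singularity or non-cyclic dual graph), \cite{dgk} shows the category of vector bundles is wild. To transfer this dichotomy to the restricted subcategory of $G$ satisfying (1) and (2), one observes that by twisting with a sufficiently positive line bundle on $Z$ any vector bundle can be brought into this subcategory without altering its endomorphism structure or family behaviour; hence tameness/wildness of the two categories coincide. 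Translating back through $R_{\kI}^{-1}$, a one-parameter family $\{G_t\}$ produces a one-parameter family $\{G_t\+m_t\La\}$ of full bundles (with $m_t$ locally constant in $t$), and hence a one-parameter family of \cm\ $A$-modules; and wild families transfer in the same manner, yielding the dichotomy as stated.

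The main obstacle is the last step of the transfer: verifying that the reduction functor $R_{\kI}$ and its inverse preserve the representation-theoretic structure of families, i.e.\ that families of bundles on $Z$ genuinely lift to families of \cm\ $A$-modules of the same parameter-dimension and that wild subcategories embed faithfully. However this verification is entirely parallel to the analogous step in \cite[Theorem 3]{dgk}: the lifting procedure from $F$ to $\kF\in\VB^f(\tA)$ described in the proof of Theorem~\ref{the} is functorial in $F$ and compatible with base change by affine schemes of parameters, so it carries algebraic families on $Z$ to algebraic families of \cm\ $A$-modules with the same moduli, and reflects the classification problem of vector bundles on $Z$ fully faithfully into that of \cm\ $A$-modules. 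Since this argument is verbatim that of \cite{dgk}, we omit its details.
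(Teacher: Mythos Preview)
Your proposal is correct and follows exactly the route the paper indicates: the paper gives no independent argument here but simply says the result follows ``just as in \cite{dgk} (and with the same proof)'', and your sketch is precisely an outline of that argument---reducing via Theorems~\ref{the} and~\ref{good} to the classification of vector bundles on a reduced curve of arithmetic genus~$1$, then invoking the tame/wild dichotomy for such curves from \cite{dgk}, and finally transferring families back through the reduction functor as in \cite{dgk}.
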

 
 For the definitions of \cm\ tame and wild singularities see \cite[Section 4]{dgk}. Though in this 
 paper only the commutative case is considered, the definitions are completely the same in the 
 non-commutative one. 
  
\section{Examples}
\label{sec4} 

 In what follows we consider \ncs\ singularities $(X,A)$, 
 where $X=\spec R$ and $R=\fK[[u,v]]$. We define
 $A$ by generators and relations. The ramification divisor $D=D(A)$ is then given by one relation
 $F=0$ for some $F\in R$, so it is a \emph{plane curve singularity}. 
 
 When blowing up the closed point $o$, we get the subset 
 $\tX\sbe \proj R[\al,\be]$ given by the equation $u\be=v\al$. We cover it by the affine charts
  $U_1:\,\be\ne0$ and $U_2:\,\al\ne0$, so their coordinate rings are, respectively,
  $R_1=R[\xi]/(u-\xi v)$ and $R_2=R[\eta]/(v-\eta u)$, where $\xi=\al/\be$ and $\eta=\be/\al$.

 \begin{exam}\label{ex1} 
  \[
   A=  R\gnrsuch{x,y}{x^2=v,\,y^2=u(u^2+\la v^2),\,xy+yx=2\eps uv},
  \]
  where $\la\notin\set{0,1}$ and $\eps^2=1+\la$.
  Then $F=uv(u-v)(u-\la v)$, so $D$ is of type $T_{44}$. We set $z=xy$, so $\set{1,x,y,z}$ is an
  $R$-basis of $A$ and $z^2=2\eps uvz-uv(u^2+\la v^2)$. One can check that $\fK_C(A)$ is a   
  field, namely a quadratic extension of $\fK(C)$, for every component of $D$. 
  For instance, if this component is $u=v$, and $g$ is its general point,
  then, modulo the ideal $(u-v)A_g$, $(z-\eps uv)^2=0$, so $z-\eps uv\in\rad A_g$. Moreover,
  \begin{align*}
  (z-\eps uv)^2&=z^2-2\eps uvz+(1+\la)u^2v^2=\\
  			&=-uv(u^2+\la v^2)+(1+\la)u^2v^2=\\
  			&=uv(u-v)(\la v-u).
  \end{align*}
  Since $uv(\la v-u)$ is invertible in $A_g$, $u-v\in(z-\eps uv)A_g$. One easily sees that
  $(z-\eps uv)A_g$ is a two-sided ideal and $A_g/(z-\eps uv)A_g\simeq \fK[[u]][x]/(x^2-u)$ is
  a field. (Note that in this factor $\eps uvx=zx=vy$, so $y=\eps ux$.) Therefore, $(X,A)$ is 
  normal and its ramification index equals $2$ on every component of $D$.
  
  After blowing up the closed point $o\in X$, we get
  \[
  \pi^*A(U_1)\simeq R_1\gnrsuch{x,y}{x^2=v,\,y^2=\xi(\xi^2+\la)v^3,\,xy+yx=2\eps\xi v^2} 
  \]
  and $z^2=2\eps\xi v^2z-\xi v^4(\xi^2+\la)$.
  So we can consider the $R_1$-subalgebra $A_1=\pi^*A(U_1)\gnr{z_1}$ of $\kK(A)$, where
  $z_1=v^{-2}z-\eps\xi$. Note that $y_1=v^{-1}y=xz_1+\eps\xi x\in A_1$.
  \[
  \pi^*A(U_2) \simeq R_2\gnrsuch{x,y}{x^2=\eta u,\,y^2=u^3(1+\la\eta^2),\,xy+yx=2\eps\eta u^2}
  \]
  and $z^2=2\eps\eta u^2z-\eta u^4(1+\eta^2\la)$.
  So we can consider the $R_2$-subalgebra $A_2=\pi^*A(U_2)\gnr{y_2,z_2}$ of $\kK(A)$, where
  $y_2=u^{-1}y,\,z_2=u^{-2}z-\eps\eta$. 
  
  Since $y_2=\eta y_1$ and $z_2=\eta^2 z_1$, $A_1(U_1\cap U_2)=A_2(U_1\cap U_2)$, so we can consider
  the \ncs\ $(\tX,\tA)$, where $\tA(U_1)=A_1,\,\tA(U_2)=A_2$. One can check, just as
  above, that it is normal. Its ramification divisor $\tD$ is given on $U_1$ by the equation
  $\xi v(\xi-1)(\xi-\la)=0$ and on $U_2$ by $u\eta(1-\eta)(1-\la\eta)=0$, so its components are 
  projective 
  lines and have normal crossings. Moreover, $e_C(A)=2$ for every component $C$ of $\tD$, and if
  $x\in C$ is a node of $\tD$, then $e_{C,x}(A)=2$. Hence $(\tX,\tA)$ is a terminal resolution
  of $(X,A)$.
  
  Consider the ideal $\kI\sb\tA$ such that $\kI(U_1)=(x)$ and $\kI(U_2)=(x,y_2)$.
  Note that $\eta y_2=xz_2-\eps\eta x$, $z_2y_2=(1+\la\eta^2)x-\eps\eta y_2$ and 
  $y_2^2=(1+\la\eta^2)u$. Therefore, if $p\in U_2$ and $\eta(p)\ne0$, then 
  $y_2,u\in\tA_px=x\tA_p$, while if $p\in U_2$ and $\eta(p)=0$, then $x,u\in\tA_py_2=y_2\tA_p$. 
  Hence $\kI$ is bi-principal.
 \begin{align*}
   A_1/\kI(U_1)&\simeq \fK[\xi,z_1]/(z_1^2-\xi(\xi-1)(\la-\xi)),\\
   \intertext{and}
   A_2/\kI(U_2)&\simeq \fK[\eta,z_2]/(z_2^2-\eta(1-\eta)(\la\eta-1)),
\end{align*}   
 hence $\La=\tA/\kI\simeq \kO_Z$, where $Z$ is an elliptic curve. Moreover, $x$ is a global section
 of $\kI$, hence of $I=\kI/\kI^2$, and it generates $I_p$ for every point $p\in Z$ except the
 point $\8$ on the chart $U_2$, where $\eta=0$. So $\kI$ is a good elliptic \rec\ and 
 $I\simeq \kO_Z(\8)$. 
 
 Now, by Theorem~\ref{good}, \cm\ modules over $A$ can be obtained as follows. We identify $Z$ with
 $\mathrm{Pic^0}(Z)$ taking $\8$ as the zero point. Denote by
 $G(r,d;p)$ the indecomposable \vb\ over $Z$ of rank $r$, degree $d$ and the Chern class
 $p\in Z=\mathrm{Pic^0}(Z)$ (see \cite{at}). It is \ggg\ \iff either $d>0$ or 
 $d=0,\,r=1$ and $p=\8$. In the latter case
 $G(1,0;\8)\simeq\kO_Z$. Then $I\ch\*_\La G(r,d;p)\simeq G(r,d-r;p)$.
 Moreover,
 \[
 \nH^0(Z,G(r,d;p))=\begin{cases}
   0 & \text{ if } 1\le d<0 \text{ or } d=0 \text{ and } p\ne\8,\\
   1 & \text{ if } d=0 \text{ and } p=\8,\\
   d & \text{ if } d>0.
 \end{cases} 
 \]
  So if $M$ is an indecomposable \cm\ $A$-module and $M\not\simeq A$, then it is uniquely 
  determined by its Kahn reduction $R_\kI M$ which is one of the following \vb s:
 \begin{itemize}
  \item[] $G(r,d;p)$, where $d<r$ or $d=r,\,p\ne\8$; then $\rk M=r$.
  \item[] $G(r,r;\8)\+\kO_Z$, where $r>1$; then $\rk M=r+1$.
  \item[] $G(r,d;p)\+(d-r)\kO_Z$, where $d>r$; then $\rk M=d$.
  \end{itemize} 
 In particular, $A$ is \emph{\cm\ tame} in the sense of \cite{dgk}. Namely, for a fixed rank $r$,
 \cm\ $A$-modules of rank $r$, except one of them, form $2(r-1)$ families parametrized by $Z$ and one
 family parametrized by $Z\=\{\8\}$, arising, respectively, from $G(d,r,p)\ (1\le d<r)$, 
 $G(r',r,p)\ (1\le r'<r)$ and $G(r,r,p)\ (p\ne\8)$. 
 \end{exam}

\begin{exam}\label{ex2} 
  \[
 A=R\gnrsuch{x,y}{x^3=v,\,y^3=u(u-v),\,xy=\zeta yx}, \text{ where } \zeta^3=1,\,\zeta\ne1.
  \]
 Then $F=uv(u-v)$ (the singularity of type $D_4$). Just as above, one can check that $A$ is normal
 and $e_c(A)=3$ for every component $C$ of $D$. After blowing up, on the chart $U_1$ we can consider
 the algebra $A_1=\pi^*A(U_1)\gnr{w_1,z_1}$, where $w_1=v^{-1}y^2,\,z_1=v^{-1}xy$, and on the chart 
 $U_2$ we can consider the algebra $A_2=\pi^*A(U_2)\gnr{w_2,z_2}$, 
 where $w_2=u^{-1}y^2,\,z_2=u^{-1}xy$. 
 Again $A_1(U_1\cap U_2)=A_2(U_1\cap U_2)$, so we can glue them into a \ncs\ $(\tX,\tA)$. 
 One can verify that it is terminal. Let $\kI$ be the locally bi-principal ideal in $\tA$ such that
 $\kI(U_1)=(x)$ and $\kI(U_2)=(x,w_2)$. Then $\tA/\kI\simeq\kO_Z$, where $Z$ is the elliptic curve
 given by the equation $z_1^3=\xi(\xi-1)$ on $U_1$ and by $z_2^3=\eta(1-\eta)$ on $U_2$. Again
 $x$ defines a global section of $\kI$, hence of $I$, and $I\simeq\kO_Z(\8)$, where $\8$ is the point
 on $U_2$ with $\eta=0$. Therefore, $\kI$ is a good elliptic \rec\ and \cm\ modules over $A$
 are described in the same way as in Example~\ref{ex1}.
  In particular, $A$ is also \cm\ tame.
\end{exam}


\begin{thebibliography}{99}
\bibitem{art}
 Artin~M.,
Maximal orders of global dimension and Krull dimension two,
  Invent. Math., 1986, 84, 195--222

 \bibitem{at}
 Atiyah~M., Vector bundles over an elliptic curve,
  Proc. Lond. Math. Soc., 1957, 7, 414--452
 
\bibitem{ch}
 Chan~D, Ingalls~C.,
 The minimal model program for orders over surfaces,
 Invent. Math., 2005, 161, 427--452

 \bibitem{dg}
 Drozd~Y, Greuel~G.-M.,
 Cohen-Macaulay module type, Compos. Math., 1993, 89, 315--338

\bibitem{dgk}
 Drozd~Y, Greuel~G.-M., Kashuba~I.,
 On \cm\ modules on surface singularities,
 Mosc. Math. J., 2003, 3, 397--418

\bibitem{dk}
 Drozd~Y, Kirichenko~V.,
 Primary orders with a finite number of indecomposable representations, 
 Izv. Akad. Nauk SSSR Ser. Mat., 1973, 37, 715--736

\bibitem{ha}
 Hartshorne~R.,
 Algebraic Geometry, Springer-Verlag, New~York--Berlin--Heidelberg, 1977
 
\bibitem{gro}
 Grothendieck~A.,
 A General Theory of Fibre Spaces with Structure Sheaf,
 University of Kansas, Lawrence, 1955
 
 \bibitem{ega}
 Grothendieck~A.,
 \'El\'ements de g\'eom\'etrie alg\'ebrique: III.
 \'Etude cohomologique des faisceaux coh\'erents, Premi\`ere partie.
 Publ. math. I.H.\'E.S., 1961, 11, 5--167
 
\bibitem{Kahn}
 Kahn~C.~P.,
 Reflexive modules on minimally elliptic singularities,
 Math.~Ann., 1989, 285, 141--160
 
\bibitem{lip}
 Lipman~J.,
 Rational singularities, with application to algebraic surfaces and unique factorization,
 Publ. math. I.H.\'E.S., 1969, 36, 195--279

\bibitem{rei}
 Reiner~I., Maximal orders, London Math. Soc. Monogr. Ser., 1975, 5
 
 \bibitem{wahl}
 Wahl~J.,
 Equisingular deformations of normal surface singularities, I,
 Ann.~Math., 1976, 104, 325--356

 \bibitem{yo}
 Yoshino~Y.,
 \cm\ Modules over \cm\ Rings,
 London Math. Soc. Lecture Notes Ser., 1990, 146
 
\end{thebibliography}
\end{document}